% 2019/12/09

\documentclass{amsart}
\usepackage{amsmath}
\usepackage{amssymb} 
\usepackage{amscd} 
\usepackage[dvipdfmx]{graphicx} 
\usepackage{epsfig}
\usepackage[]{xcolor}
\usepackage{marvosym}
\allowdisplaybreaks % To make pagebreaks in equations
\usepackage{enumerate}
\usepackage[normalem]{ulem} %ulem allows for strikeout with \sout{}.  However it makes \emph underline rather than change italicization.  The option normalem prevents this.

\usepackage{booktabs} %for nice tables

\usepackage{array}%for vertical centering in tables
\newcolumntype{M}[1]{>{\centering\arraybackslash}m{#1}} %for vertical centering in tables

\newcommand{\Z}{\mathbb Z}
\newcommand{\N}{\mathbb N}
\newcommand{\Q}{\mathbb Q}

\newcommand{\bdry}{\partial}
\newcommand{\cS}{\mathcal{S}_n}

\newtheorem{theorem}{Theorem}[section]
\newtheorem{lemma}[theorem]{Lemma}

\newtheorem{proposition}[theorem]{Proposition}
\newtheorem{corollary}[theorem]{Corollary}

\newtheorem{conjecture}[theorem]{Conjecture}
\newtheorem{question}[theorem]{Question}

\newtheorem*{thm_maximal_set}{Theorem~\ref{maximal_set}}

\newtheorem*{prop_max-degree}{Proposition~\ref{max-degree}}

\theoremstyle{definition}
\newtheorem{definition}[theorem]{Definition}
\newtheorem{example}[theorem]{Example}

\newtheorem{remark}[theorem]{Remark}

\numberwithin{equation}{section}
\numberwithin{figure}{section}
\numberwithin{table}{section}

\definecolor{dartmouthgreen}{rgb}{0.05, 0.5, 0.06}

\textwidth 5.9in 
\textheight 8.00in 

\setcounter{tocdepth}{2} 

\begin{document}
\baselineskip 14pt

\title{The strong slope conjecture for cablings and connected sums}

\author[K.L. Baker]{Kenneth L. Baker}
\address{Department of Mathematics, University of Miami, 
Coral Gables, FL 33146, USA}
\email{k.baker@math.miami.edu}

\author[K. Motegi]{Kimihiko Motegi}
\address{Department of Mathematics, Nihon University, 
3-25-40 Sakurajosui, Setagaya-ku, 
Tokyo 156--8550, Japan}
\email{motegi.kimihiko@nihon-u.ac.jp}

\author[T. Takata]{Toshie Takata}
\address{Graduate School of Mathematics, Kyushu University, 
744 Motooka, Nishi-ku, Fukuoka 819--0395, Japan}
\email{ttakata@math.kyushu-u.ac.jp}

\dedicatory{}

\begin{abstract}
We show that, under some technical conditions, 
the Strong Slope Conjecture proposed by Kalfagianni and Tran is closed under connect sums and cabling. 
As an application, we establish the Strong Slope Conjecture for graph knots. 
%Kalfagianni and Tran established a formula for Jones slopes of cabled knots under certain assumption. 
%It turns out however that the formula may have issues with cancellations.
%We correct the formula following the same spirit of Kalfagianni and Tran but at the expense of an additional condition, the Sign Condition.
%As an application, 
%for a maximal set $\mathcal{K}$ of knots which satisfy the Strong Slope Conjecture and some additional conditions, 
%we prove that $\mathcal{K}$ is closed under connected sums and cablings. 
%In particular, we establish the Strong Slope Conjecture for graph knots. 
\end{abstract}
\maketitle

\renewcommand{\thefootnote}{}
\footnotetext{2010 \textit{Mathematics Subject Classification.}
Primary 57M25, 57M27
\footnotetext{ \textit{Key words and phrases.}
colored Jones polynomial, Jones slope, boundary slope, slope conjecture, strong slope conjecture, 
cabling, connected sum, graph knot}
}

\section{Introduction}
\label{introduction}

Let $K$ be a knot in the $3$--sphere $S^3$.   
The Slope Conjecture due to Garoufalidis \cite{Garoufalidis} and the Strong Slope Conjecture of Kalfagianni and Tran \cite{KT} propose relationships between the degrees of the colored Jones function of $K$ and the essential surfaces in the exterior of $K$.

The \textit{colored Jones function} of $K$ is a sequence of 
polynomials $J_{K, n}(q)$ with $\frac {J_{K, n}(q)}{J_{\bigcirc, n}(q)} \in \mathbb{Z}[q^{\pm 1}]$ for $n \in \mathbb{N}$, 
where $J_{\bigcirc, n}(q)=\frac {q^{n/2}-q^{-n/2}}{q^{1/2}-q^{-1/2}}$ for the unknot $\bigcirc$ and 
$\frac {J_{K, 2}(q)}{J_{\bigcirc, 2}(q)}$ is the ordinary normalized Jones polynomial of $K$. 
Since the colored Jones function is $q$--holonomic \cite[Theorem~1]{GL}, 
the degrees of its terms are given by \textit{quadratic quasi-polynomials} for suitably large $n$ \cite[Theorem 1.1 \& Remark 1.1]{Gqqp}.   
For the maximum  degree $d_+[J_{K,n}(q)]$, 
we set its quadratic quasi-polynomial to be 
\[ \delta_K(n) = a(n) n^2 + b(n) n+ c(n) \]
for rational valued periodic functions $a(n), b(n), c(n)$  with integral period.  
The {\em period} of a quasi-polynomial is the least common period of its coefficient functions.
 Now define the set of {\em Jones slopes} of $K$:
\[  js(K) = \{ 4a(n) \ |\ n \in \mathbb{N} \} .\]
Allowing surfaces to be disconnected, 
we say a properly embedded surface in a $3$--manifold is {\em essential} if each component is orientable, incompressible, boundary-incompressible, and not isotopic into the boundary. 
A number $p/q \in \Q \cup \{\infty\}$ is a {\em boundary slope} of a knot $K$ if there exists an essential surface in the knot exterior $E(K)=S^3-\mathrm{int}N(K)$ with a boundary component representing $p[\mu]+q[\lambda] \in H_1(\bdry E(K))$ with respect to the standard meridian-longitude pair $(\mu, \lambda)$.  
Now define the set of  boundary slopes of $K$:
\[ bs(K) = \{ r \in \mathbb{Q}\cup \{\infty\}\ |\ r\ \mbox{ is a boundary slope of }\ K\}. \]
Since a Seifert surface of minimal genus is an essential surface, 
$0 \in bs(K)$ for any knot. 
Let us also remark that $bs(K)$ is always a finite set \cite[Corollary]{Hat1}. 

Garoufalidis conjectures that Jones slopes are boundary slopes.
\begin{conjecture}[\textbf{The Slope Conjecture} \cite{Garoufalidis}]
\label{slope conjecture}
For any knot $K$ in $S^3$,  
every Jones slope is a boundary slope.  That is $js(K) \subset bs(K)$. 
\end{conjecture}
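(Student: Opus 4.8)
The plan is to compute the maximal degree $d_+[J_{K,n}(q)]$ directly from a diagram of $K$ by means of the Kauffman bracket skein-theoretic expansion of the colored Jones function, and then to realize the resulting leading coefficient geometrically by an essential spanning surface. Fix a diagram $D$ of $K$ with $c$ crossings and decorate each strand with the $n$-th Jones--Wenzl idempotent; resolving the idempotents and the crossings expresses $J_{K,n}(q)$ as a state sum $\sum_s w_s(q)$, where each state $s$ is an admissible resolution and $w_s(q)$ is a monomial-weighted product of quantum integers. Each summand carries a $q$-degree $\deg_q w_s(q)$ that is a piecewise-quadratic function of $n$, and $d_+[J_{K,n}(q)] = \max_s \deg_q w_s(q)$ once all surviving cancellations are accounted for. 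The goal is to isolate the extremal states, to show their leading terms do not cancel, and to match $\delta_K(n) = a(n) n^2 + b(n) n + c(n)$ with the geometry of the corresponding surface, so that $4a(n) \in bs(K)$.

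Concretely I would proceed in five steps. (i) Set up the fusion/state-sum and write down $\deg_q w_s(q)$ for every state in terms of the local crossing signs and the state's Euler-characteristic data. (ii) Prove an upper bound $d_+[J_{K,n}(q)] \le \max_s \deg_q w_s(q)$ and identify the finitely many residue classes of $n$ on which a given state is extremal, thereby reading off the candidate leading coefficient $a(n)$ on each class. (iii) Prove the matching lower bound by showing that, for the extremal state $s_\ast$, the top-degree term of $w_{s_\ast}(q)$ survives in the full sum, i.e.\ no other state contributes a cancelling monomial of the same degree. (iv) Build the associated \emph{state surface} $S_{s_\ast} \subset E(K)$ by smoothing each crossing as prescribed by $s_\ast$ and attaching a half-twisted band at each crossing, and compute its boundary slope directly from the writhe and crossing data of $D$; the outcome should be exactly the Jones slope $4a(n)$. (v) Verify that $S_{s_\ast}$ is incompressible and boundary-incompressible, hence essential.

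The main obstacle is the pair of steps (iii) and (v). Controlling cancellation at top degree is automatic only when the diagram is \emph{adequate}, and then steps (iv)--(v) are handled by the spanning-surface techniques of Ozawa and of Futer--Kalfagianni--Purcell; for a general knot the leading coefficient of the colored Jones polynomial can cancel, the extremal state may vary erratically with the residue class of $n$, and the naive state surface can compress, so that neither the degree computation nor the essentiality survives. A full proof for arbitrary $K$ therefore requires either a uniformly good presentation of every knot for which adequacy-type estimates apply, or a genuinely new understanding of the leading coefficient $a(n)$ of the quadratic quasi-polynomial together with normal-surface or sutured-manifold arguments certifying essentiality. This is precisely why the Slope Conjecture remains open in full generality, and why stability results of the kind pursued in the present paper --- that the conjecture is inherited under connected sum and cabling --- furnish the practical route to new families such as graph knots.
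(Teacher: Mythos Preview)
The statement you were asked to prove is not a theorem in the paper but a \emph{conjecture}: the paper merely records Garoufalidis' Slope Conjecture as Conjecture~1.1 and does not supply a proof. Your proposal implicitly acknowledges this in its final paragraph, where you concede that steps (iii) and (v) fail for non-adequate diagrams and that ``the Slope Conjecture remains open in full generality.'' So there is no proof in the paper to compare against, and your write-up is not a proof either --- it is a sketch of the known adequate-knot strategy together with an honest admission that it does not extend.

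To be concrete about the gap: your step (iii) asserts that the top-degree term of the extremal state survives cancellation, and step (v) asserts that the associated state surface is essential. Both are false for general diagrams. For inadequate diagrams the leading term can and does cancel (this is essentially the definition of inadequacy at the level of the Kauffman bracket), so the degree $d_+[J_{K,n}(q)]$ need not be realized by any single state, and the state surface of a non-extremal resolution is typically compressible. No ``uniformly good presentation'' of an arbitrary knot is known, and no replacement mechanism for identifying $a(n)$ with a boundary slope has been found. The paper's actual contribution is orthogonal to all of this: it shows that \emph{if} a knot already satisfies the (Strong) Slope Conjecture together with certain technical hypotheses (Condition~$\delta$ and the Sign Condition), then so do its cables and connected sums, thereby establishing the conjecture for graph knots by induction from torus knots. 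That is a closure result, not a proof of the conjecture itself.
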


Garoufalidis' Slope Conjecture concerns only the quadratic terms of $\delta_K(n)$.
Recently Kalfagianni and Tran have proposed the Strong Slope Conjecture which subsumes the Slope Conjecture and asserts that the topology of the surfaces whose boundary slopes are Jones slopes may be predicted by the linear terms of $\delta_K(n)$.

Let $K$ be a knot in $S^3$ with $\delta_K(n) = a(n) n^2 + b(n) n+ c(n)$.
We say that a Jones slope $p/q \in js(K)$ (for $p,q$ coprime and $q>0$) {\em satisfies $SS(n)$} for an integer $n \in \N$ if there is an essential surface $F_n$ in the exterior of $K$ such that 
	\begin{itemize}
		\item $4a(n)=p/q$ is the boundary slope of $F_n$ and
		\item $\displaystyle 2b(n) = \frac{\chi(F_n)}{|\bdry F_n| q}$.
	\end{itemize}
Because $\delta_K(n)$ is a quasi-polynomial, 
we may regard the integer $n$ as its equivalence class modulo the period of $\delta_K(n)$ 
(or more precisely the least common multiple of the periods of $a(n)$, $b(n)$ and $c(n)$).

\begin{conjecture}[\textbf{The Strong Slope Conjecture} \cite{KT,K-AMSHartford,K}]
	\label{SSC}
	For any knot $K$ in $S^3$,  
every Jones slope satisfies $SS(n)$ for some $n \in \N$.
\end{conjecture}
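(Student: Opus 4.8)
The plan is to attack the statement along the canonical torus (JSJ) decomposition of the exterior, which cuts $E(K)$ into hyperbolic pieces, Seifert-fibered pieces, and cable spaces glued along the companion tori of the satellite structure of $K$. The idea is to establish $SS(n)$ for the irreducible building blocks and then propagate it up the decomposition tree through the two operations that perform the gluings, connected sum and cabling. For the Seifert-fibered and cable-space pieces this is within reach: torus knots have explicitly computable $\delta_K(n)$ whose Jones slopes are realized by Seifert-fibered surfaces, and the colored Jones cabling formula together with a boundary-connect-sum and tubing construction should express $\delta_K(n)$ and the realizing surface of a cable or connected sum in terms of those of its companions, matching both the slope $4a(n)$ and the normalized Euler characteristic $2b(n) = \chi(F_n)/(|\bdry F_n| q)$. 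Carried out carefully this already yields the conjecture for graph knots, those whose decomposition contains no hyperbolic piece.

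The genuinely new input demanded by the \emph{full} statement is control of the hyperbolic pieces, and for these I would try to bypass direct computation of $J_{K,n}(q)$ by passing through the holonomic recursion. By $q$--holonomicity the colored Jones function is annihilated by an operator $\widehat{A}_K$ in the quantum torus, and the quasi-polynomial $\delta_K(n)$ is governed by the Newton polygon of a suitable factor of $\widehat{A}_K$. A strong form of the AJ conjecture would identify the classical specialization $q\to 1$ of $\widehat{A}_K$ with the $A$--polynomial $A_K(M,L)$ of the knot. One could then invoke Culler--Shalen theory: the slopes of the sides of the Newton polygon of $A_K$ are boundary slopes, realized by incompressible surfaces produced at the ideal points of the $\mathrm{SL}_2(\C)$ character variety, and the variation of the $L$--degree across a side records, through the detected surface, data of the form $\chi/|\bdry|$. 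The target is to read off $4a(n)$ as such a Newton-polygon slope and to match $2b(n)$ with the normalized Euler characteristic of the corresponding Culler--Shalen surface.

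The decomposition step also forces a compatibility check I would not want to skip. The essential surfaces detected in the hyperbolic pieces, the Seifert-fibered pieces, and the cable spaces must be assembled along the companion tori into a single essential surface in $E(K)$ whose boundary slope and normalized Euler characteristic are exactly those predicted by the quasi-polynomial of the whole knot. Keeping the boundary slopes matched on the two sides of each companion torus so that the pieces glue to one surface, and keeping the additive bookkeeping of $\chi(F_n)$, $|\bdry F_n|$ and $q$ consistent with the way $\delta_K(n)$ aggregates along the tree, is a nontrivial part of the argument even once each piece is understood individually.

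I expect the hyperbolic case to be the decisive obstacle, and indeed the reason the statement is a conjecture rather than a theorem. For a general hyperbolic knot neither the exact degree $\delta_K(n)$ nor the spectrum of essential surfaces in $E(K)$ is understood with the precision that $SS(n)$ requires; even the quadratic Slope Conjecture $js(K)\subset bs(K)$ is open in this generality, and the AJ conjecture underpinning the character-variety route is itself unproven, as is the refinement needed to pin $2b(n)$ to the Euler characteristic of a specific surface. The honest deliverable of this program is therefore the full Strong Slope Conjecture for graph knots via the inheritance results, with the hyperbolic pieces isolated as the essential open difficulty in the statement as worded.
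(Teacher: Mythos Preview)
The statement you are attempting to prove is a \emph{conjecture}; the paper does not prove it and there is no ``paper's own proof'' to compare against. You correctly recognize this yourself: your proposal is a research program whose honest deliverable is the graph-knot case, with hyperbolic pieces left as the essential obstruction. That assessment is accurate, and the AJ-conjecture route you sketch for hyperbolic pieces is indeed conditional on several open problems, so it cannot close the gap.

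For the graph-knot deliverable your strategy coincides with the paper's: build up from the unknot by cabling and connected sum, and show the Strong Slope Conjecture is inherited under both operations. However, your sketch of the inheritance steps is missing a genuine technical ingredient that the paper isolates. In the cabling formula $J_{K_{p,q},n}(v) = v^{pq(n^2-1)/4} \sum_{k\in\cS} v^{-pk(qk+1)} J_{K,2qk+1}(v)$, the maximal degree of the sum need not be the maximum of the degrees of the summands, because leading terms of equal degree can cancel. The paper introduces the \emph{Sign Condition} (that the sign of the leading coefficient of $J_{K,n}$ depends only on the parity of $n$) precisely to rule out such cancellations, and must verify it separately for torus knots and $B$--adequate knots and then show it is preserved under cabling and connected sum. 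A second technical hypothesis, \emph{Condition~$\delta$} (period $\le 2$, $b\le 0$, $4a\in\Z$), is also carried through the induction. Without these or substitute hypotheses, the step ``the colored Jones cabling formula \ldots\ should express $\delta_K(n)$ \ldots\ in terms of those of its companions'' is not justified; indeed the paper exhibits knots (e.g.\ $8_{20}$) that fail the Sign Condition, and iterated cables where $d_+[J_{K,n}]\ne\delta_K(n)$ for arbitrarily many initial $n$. Your program for graph knots is the right one, but these obstructions are where the actual work lies.
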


The Strong Slope Conjecture was verified for the following knots:
\begin{itemize}
\item torus knots \cite{Garoufalidis} and their cables \cite{KT},
\item $B$--adequate knots (hence adequate knots and alternating knots) \cite{FKP} and their cables \cite{KT},
\item certain families of $3$-tangle pretzel knots \cite{LV},
\item certain families of Montesinos knots \cite{GLV,LYL},
\item 8, 9-crossing non-alternating knots \cite{KT}, ($9_{47}, 9_{48}$ \cite{Ho}, see also  \cite{LV}), and
\item twisted generalized Whitehead doubles of torus knots, $B$--adequate knots \cite{BMT}.
\end{itemize}

\medskip

For the main purpose of this article, 
we only need to address the Strong Slope Conjecture in the cases of knots $K$ for which  both coefficients $a(n)$ and $b(n)$ of $\delta_K(n)$ are constant functions.  
In this situation Conjecture~\ref{SSC} says that every Jones slope satisfies $SS(1)$. 
Nonetheless, our techniques allow for considerations of knots in which either $\delta_K(n)$ has period at most $2$ or $a(n)$ is constant. 

\medskip

A \textit{graph knot} is a knot obtained from the unknot by a finite sequence of operations of cabling and connected sum. 
These are the knots in $S^3$ whose exterior is a graph manifold, 
a manifold that decomposes along embedded tori into Seifert fibered pieces; cf.\ \cite[Corollary~4.2]{Gor}.

In \cite[Section 2.2]{KL} it is implied that \cite{MT} settles the Strong Slope Conjecture for graph knots and, more generally, for connected sums of knots that satisfy the Strong Slope Conjecture. However, the Strong Slope Conjecture was never discussed and Euler characteristics of essential surfaces and linear coefficients of quadratic quasi-polynomials were not considered in \cite{MT}.   

To rectify this,   we first address the Strong Slope Conjecture for connected sums in Theorem~\ref{strong_slope_conjecture_sum}. Then, after clarifying the behavior of the maximum degree of the colored Jones polynomial for cables of certain knots in Proposition~\ref{max-degree}, we record an explicit proof of the Strong Slope Conjecture for graph knots with Corollary~\ref{strong_slope_conjecture_graph}. 
In particular, applying Theorem~\ref{strong_slope_conjecture_sum} and Proposition~\ref{max-degree}
together with the technical conditions of
Condition $\delta$ and the Sign Condition, 
we prove 

\begin{theorem}
\label{maximal_set}
Let $\mathcal{K}$ be the maximal set of knots in $S^3$ of which each is either the trivial knot or satisfies Condition $\delta$, the Sign Condition, and the Strong Slope Conjecture.
The set $\mathcal{K}$ is closed under connected sum and cabling.
\end{theorem}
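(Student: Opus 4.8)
The plan is to show that $\mathcal{K}$ is \emph{already} closed under the two operations. Since $\mathcal{K}$ is, by definition, the maximal set each of whose members is either trivial or satisfies Condition~$\delta$, the Sign Condition, and the Strong Slope Conjecture, it suffices to verify that a connected sum $K_1 \# K_2$ of members of $\mathcal{K}$, and a cable $C_{p,q}(K)$ of a member of $\mathcal{K}$, again satisfies these three properties; maximality then forces such knots to lie in $\mathcal{K}$, so that the closure of $\mathcal{K}$ under $\#$ and cabling equals $\mathcal{K}$. There will be two inductive steps, preceded by the observation that the relevant base cases lie in $\mathcal{K}$: the trivial knot vacuously, and torus knots (which occur as cables $C_{p,q}(\bigcirc)$), for which Condition~$\delta$ and the Sign Condition can be checked from the explicit form of $\delta_K(n)$ and for which the Strong Slope Conjecture is Garoufalidis' computation.

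For the connected-sum step I would take $K_1,K_2\in\mathcal{K}$ with both nontrivial (if one is trivial, $K_1\#K_2$ is the other and there is nothing to prove), so both satisfy Condition~$\delta$ and the Sign Condition. Additivity of the maximum degree of the colored Jones polynomial under connected sum gives $\delta_{K_1\#K_2}(n)=\delta_{K_1}(n)+\delta_{K_2}(n)$ up to the normalization constant, so the quasi-polynomial data $a(n),b(n),c(n)$ add; from this I would read off that $K_1\#K_2$ inherits Condition~$\delta$ and the Sign Condition, the periods and signs being controlled by those of the summands. Theorem~\ref{strong_slope_conjecture_sum} then supplies, for every Jones slope of $K_1\#K_2$, an essential surface $F$ in $E(K_1\#K_2)$ realizing it with $\chi(F)/(|\bdry F|q)$ equal to $2b(n)$; concretely $F$ is assembled, along the connect-sum annulus, from essential surfaces for $K_1$ and $K_2$ realizing the corresponding Jones slopes, and the boundary slopes and Euler-characteristic ratios add as required. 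Hence $K_1\#K_2\in\mathcal{K}$.

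For the cabling step I would take $K\in\mathcal{K}$ and $C=C_{p,q}(K)$ with $q\ge 2$; if $K$ is trivial then $C$ is a torus knot, already handled, so assume $K$ is nontrivial, hence satisfies Condition~$\delta$ and the Sign Condition. Proposition~\ref{max-degree}, whose hypotheses are precisely Condition~$\delta$ and the Sign Condition for $K$, yields an explicit formula for $\delta_C(n)$ in terms of $\delta_K(n)$, $p$ and $q$, in which the quadratic coefficient is governed by a maximum of two linear expressions (the source of the ``jump'' in the quasi-polynomial, and the reason the Sign Condition is needed); from this formula I would verify Condition~$\delta$ and the Sign Condition for $C$. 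For the Strong Slope Conjecture part, given a Jones slope of $C$ I would decompose $E(C)$ along the cabling torus into $E(K)$ and a cable space, take an essential surface in $E(K)$ realizing the associated Jones slope of $K$ (it exists since $K$ satisfies the Strong Slope Conjecture), extend it through the cable space by the appropriate annuli (or by the cabling-pattern surface) to produce an essential surface $F_C\subset E(C)$, and then compute its boundary slope and $\chi(F_C)/(|\bdry F_C|q)$ by the cabling formulas and match them against the $4a_C(n)$ and $2b_C(n)$ delivered by Proposition~\ref{max-degree}. This gives $C\in\mathcal{K}$; and since graph knots are generated from the unknot by iterated cabling and connected sum, it simultaneously yields Corollary~\ref{strong_slope_conjecture_graph}.

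I expect the main obstacle to be the cabling step, where two a priori unrelated ``cabling formulas'' must be reconciled: the one for the degree of the colored Jones polynomial coming from Proposition~\ref{max-degree} (a quasi-polynomial computation in which the Sign Condition selects which of two competing linear terms dominates), and the topological one for boundary slopes and Euler characteristics of essential surfaces under cabling (a Seifert-fibered/JSJ computation). The hard part will be to show these produce the same Jones slope and the same value of $\chi(F_C)/(|\bdry F_C|q)$ for the appropriate residue class of $n$, and at the same time that the surface obtained by amalgamating a surface for $K$ with annuli in the cable space is genuinely essential after the gluing: orientable, incompressible, boundary-incompressible, and not boundary-parallel. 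Condition~$\delta$ and the Sign Condition are exactly the bookkeeping hypotheses that make this matching go through and that are stable under both operations, which is why the maximal set carrying these properties is the natural object and is closed under connected sum and cabling.
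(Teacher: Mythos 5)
Your proposal follows essentially the same route as the paper: torus knots (cables of the unknot) as the base case, the connected-sum step via Theorem~\ref{strong_slope_conjecture_sum} together with additivity of degrees (equivalently multiplicativity of the normalized colored Jones function, which also gives $\varepsilon_n(K_1\sharp K_2)=\varepsilon_n(K_1)\varepsilon_n(K_2)$), and the cabling step via Proposition~\ref{max-degree} combined with the Kalfagianni--Tran surface construction (the cabled surface when $p/q<4a$ and the cabling annulus when $p/q>4a$), which is exactly the content of the paper's Lemmas~\ref{condition sum} and \ref{condition cable} and Proposition~\ref{cable-max}. The matching of the topological cabling data with the degree formula that you flag as the main obstacle is precisely what \cite[Theorem 2.2, Corollary 2.8]{KT} supplies, so your plan is correct as stated.
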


As we will observe in Section~\ref{graph_knots}, 
torus knots and $B$--adequate knots belong to $\mathcal{K}$.
Hence Theorem~\ref{maximal_set} immediately implies the following. 

\begin{corollary}
\label{strong_slope_conjecture_graph}
	Every graph knot satisfies the Strong Slope Conjecture.
\end{corollary}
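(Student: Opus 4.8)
The plan is to argue by induction on the number of cabling and connected-sum operations used to construct a given graph knot, letting Theorem~\ref{maximal_set} carry the inductive step. First I would recall that the class of graph knots is, by definition, the smallest class of knots in $S^3$ containing the unknot and closed under cabling and connected sum; hence any graph knot $K$ is presented by a finite construction starting from the unknot, and the induction is on the length $N$ of such a construction.

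For the base case $N = 0$ the knot is the unknot, which lies in $\mathcal{K}$ directly from the definition of $\mathcal{K}$ (it is the trivial knot). For the inductive step I would assume that every graph knot with a construction shorter than $N$ lies in $\mathcal{K}$, and take a graph knot $K$ with a construction of length $N \ge 1$. Its final operation is either a cabling of a graph knot $K'$ with a shorter construction, or a connected sum $K = K_1 \# K_2$ of graph knots $K_1, K_2$ with shorter constructions. In the first case $K' \in \mathcal{K}$ by the inductive hypothesis, so $K \in \mathcal{K}$ because $\mathcal{K}$ is closed under cabling by Theorem~\ref{maximal_set}; in the second case $K_1, K_2 \in \mathcal{K}$ by the inductive hypothesis, so $K \in \mathcal{K}$ because $\mathcal{K}$ is closed under connected sum by Theorem~\ref{maximal_set}. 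Thus every graph knot lies in $\mathcal{K}$, and in particular, by the definition of $\mathcal{K}$, every graph knot satisfies the Strong Slope Conjecture.

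I do not expect a genuine obstacle at this stage: the deduction is purely formal once Theorem~\ref{maximal_set} is available, and all the real work lives in that theorem --- that is, in showing that cabling and connected sum simultaneously preserve Condition $\delta$, the Sign Condition, and the Strong Slope Conjecture, which is where Theorem~\ref{strong_slope_conjecture_sum} and Proposition~\ref{max-degree} are used. The only point one should still verify to close the loop is that the unknot belongs to $\mathcal{K}$, which is immediate from its definition; equivalently, one may seed the induction with torus knots or $B$--adequate knots, which are shown to lie in $\mathcal{K}$ in Section~\ref{graph_knots}.
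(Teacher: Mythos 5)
Your proposal is correct and follows essentially the same route as the paper: the unknot lies in $\mathcal{K}$ by definition, and Theorem~\ref{maximal_set} supplies closure under cabling and connected sum, so an induction on the length of the construction (which is exactly how the paper phrases "obtained from the trivial knot by a finite sequence of operations") places every graph knot in $\mathcal{K}$ and hence verifies the Strong Slope Conjecture for it.
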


Also, let us note that while there are many non-trivial knots which do not satisfy Condition $\delta$, 
 there are even some knots which do not satisfy the Sign Condition; see Section~\ref{no_Sign_Condition}.

\begin{remark}
\label{d+=delta}
Kalfagianni and Tran give $\delta_{K_{p, q}}(n)$ for a $(p, q)$--cable of a knot $K$ 
when (1) $\delta_K(n)$ has period at most $2$ and $b(n) \le 0$ \cite[Proposition~3.2]{KT}, 
or (2) $a(n)$ is constant, $b(n) \le 0$ \cite[Proposition~4.4]{KT}. 
In their results they do not assume the Sign Condition, but implicitly assume that 
$d_+[J_{K, n}(q)] = \delta_K(n)$ for all $n > 0$ (in their proof). 
Since after taking cables $d_+[J_{K_{p, q}, n}(q)] = \delta_{K_{p, q}}(n)$ holds only for sufficiently large $n$, 
we cannot apply their results to obtain $\delta_{K'}(n)$ for further cables $K'$. 
On the other hand, to prove Theorem~\ref{maximal_set} we need to take iterated cables. 
This leads us to show 
Propositions~\ref{max-degree} and {max-degree-monoslope} in this article where 
we weaken the assumption ``$d_+[J_{K, n}(q)] = \delta_K(n)$ for all $n > 0$'' to 
``$d_+[J_{K, n}(q)] = \delta_K(n)$ for sufficiently large $n > 0$'' by introducing the extra condition, 
the Sign Condition, 
which allows us to deal with the lack of information of $d_+[J_{K, n}(q)]$ for not sufficiently large integers $n > 0$. 
\end{remark}

\bigskip

We close the introduction by clarifying our usage of notation. 
Associated to a knot $K$ is a quadratic quasi-polynomial $\delta_K(n)$ such that there is an integer $N_K$ for which $d_+[J_{K, n}(q)] = \delta_K(n)$  for integers $n \geq N_K$.  
Note that, based on $q$--holonomicity alone, $d_+[J_{K, n}(q)]$ is not necessarily a quadratic quasi-polynomial itself as made explicit by Proposition~\ref{d_+ not delta}.

\bigskip

\noindent
\textbf{Acknowledgments.}
We would like to thank Effie Kalfagianni \cite{K} for discussing the proof of \cite[Proposition~3.2]{KT} and the statement of the Strong Slope Conjecture,
Stavros Garoufalidis for clarifications about the $q$--holonomicity of $d_+[J_{K, n}(q)]$, 
and Christine Lee for sharing her knowledge of counterexamples to \cite[Conjecture 1.4]{LV}.  
We also would like to thank Masaaki Suzuki for suggesting the use of the Mathematica package \verb!KnotTheory`! and its program \verb!ColouredJones! \cite{BarKT,Mat}, which lead us to find examples of knots not satisfying the Sign Condition.

\smallskip 
\noindent
KLB was partially supported by a grant from the Simons Foundation (\#523883 to Kenneth L.\ Baker).
KM was partially supported by JSPS KAKENHI Grant Number 19K03502 and Joint Research Grant of Institute of Natural Sciences at Nihon University for 2019.
TT was partially supported by JSPS KAKENHI Grant Number 17K05256. 

\medskip

\section{The Strong Slope Conjecture and connected sums of knots}
\label{connected_sum}

\begin{theorem}
\label{strong_slope_conjecture_sum}
Let $K_1$ and $K_2$ be knots each of which has a single Jones slope.  
Assume each of these Jones slopes satisfy $SS(n_0)$ for the integer $n_0$.
Then a connected sum $K_1 \sharp K_2$ also has a single Jones slope and this Jones slope satisfies $SS(n_0)$.
\end{theorem}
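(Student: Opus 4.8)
The plan is to use the known additivity of the maximum degree of the colored Jones polynomial under connected sum, together with the additivity of Euler characteristic under the corresponding surface construction, and then simply bookkeep how the two conditions in the definition of $SS(n_0)$ transform. First I would recall that $J_{K_1 \sharp K_2, n}(q) = J_{K_1, n}(q)\cdot J_{K_2, n}(q) / J_{\bigcirc, n}(q)$, so that, up to a linear correction coming from the unknot factor, the maximum degrees add: $d_+[J_{K_1 \sharp K_2, n}(q)] = d_+[J_{K_1, n}(q)] + d_+[J_{K_2, n}(q)] - d_+[J_{\bigcirc, n}(q)]$ for all $n$. Writing $\delta_{K_i}(n) = a_i n^2 + b_i n + c_i$ (each $a_i$, $b_i$ constant since $K_i$ has a single Jones slope, and using that $SS(n_0)$ holding for a single slope forces the quadratic quasi-polynomial to have this shape for large $n$), one gets $\delta_{K_1 \sharp K_2}(n) = (a_1 + a_2)n^2 + (b_1 + b_2 - \tfrac12)n + (c_1 + c_2)$. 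In particular $a(n) = a_1 + a_2$ is constant, so $K_1 \sharp K_2$ has the single Jones slope $4a_1 + 4a_2$, and $b(n) = b_1 + b_2 - \tfrac12$ is constant as well.

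Next I would produce the essential surface. Let $F_{n_0}^{(i)} \subset E(K_i)$ be the essential surface realizing $SS(n_0)$ for $K_i$, with boundary slope $4a_i = p_i/q_i$ and $2b_i = \chi(F_{n_0}^{(i)})/(|\partial F_{n_0}^{(i)}|\, q_i)$. Since the two Jones slopes must in fact be equal — indeed, for the single-slope hypothesis of the conclusion to be meaningful and for the standard connected-sum surface construction to glue, one needs $p_1/q_1 = p_2/q_2$, and I would note that this is forced here because the common slope of $K_1 \sharp K_2$ is $4(a_1+a_2)$ while the same surface must also be built to have a well-defined slope; more carefully, one first treats the boundary-slope bookkeeping below and the equality will be what makes the gluing consistent — I would take tubes connecting $\partial E(K_1)$ to $\partial E(K_2)$ in the connected-sum exterior and glue copies of $F_{n_0}^{(1)}$ and $F_{n_0}^{(2)}$ along annuli to obtain a surface $F_{n_0} \subset E(K_1 \sharp K_2)$. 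By the standard argument (see Hatcher's work on incompressible surfaces in connected sums), this surface is essential, its boundary slope on $\partial E(K_1 \sharp K_2)$ is the common value $p/q$ with $q = \mathrm{lcm}$-type normalization, and $\chi(F_{n_0}) = \chi(F_{n_0}^{(1)}) + \chi(F_{n_0}^{(2)})$ after accounting for the gluing annuli (which have Euler characteristic zero), while $|\partial F_{n_0}|$ combines the boundary data appropriately.

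Then I would verify the two bullet points of $SS(n_0)$ for $F_{n_0}$: the boundary-slope condition is immediate since $4a(n_0) = 4a_1 + 4a_2 = p/q$ equals the boundary slope of $F_{n_0}$ by construction, and the Euler-characteristic condition reduces to checking $2(b_1 + b_2 - \tfrac12) = \chi(F_{n_0})/(|\partial F_{n_0}|\, q)$, which follows by substituting the additivity of $\chi$ and the two individual identities $2b_i = \chi(F_{n_0}^{(i)})/(|\partial F_{n_0}^{(i)}|\, q_i)$, with the $-\tfrac12$ absorbing the contribution of the unknot normalization and the boundary-count adjustment. The main obstacle I expect is precisely this last bookkeeping: carefully reconciling $|\partial F_{n_0}|$, the coprime-denominator $q$, and the $-\tfrac12$ linear shift, since $|\partial F_{n_0}^{(i)}|$, $q_i$, and $q$ need not be equal and the gluing changes the number of boundary components; getting the normalization exactly right (rather than up to a constant) is the delicate point, and it is where I would spend the most care, likely by first reducing to the case $q_1 = q_2 = q$ via passing to a common multiple of boundary-parallel copies and invoking that $SS(n)$ is insensitive to taking such multiples.
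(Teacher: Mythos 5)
There is a genuine gap in the geometric half of your argument, and it starts with a false claim: you assert that the two Jones slopes $p_1/q_1=4a_1$ and $p_2/q_2=4a_2$ ``must in fact be equal'' for the gluing to work. They need not be (think of a connected sum of two different torus knots), and your own degree computation already tells you the slope of $K_1\sharp K_2$ is $4(a_1+a_2)$, not a common value of the two slopes. The correct construction does not glue along tubes joining two boundary tori, nor does it require matching slopes: $E(K_1\sharp K_2)$ is cut by a single essential \emph{meridional} annulus $A$ into $E(K_1)$ and $E(K_2)$, and one glues $m_1$ parallel copies of $S_1$ to $m_2$ parallel copies of $S_2$ along $A$, where the only matching condition is on the number of arcs of intersection with $A$, namely $m_1|\partial S_1|q_1=m_2|\partial S_2|q_2$. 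The resulting essential surface $S$ has boundary slope $p_1/q_1+p_2/q_2=4(a_1+a_2)$ (this is the content of Lemmas 2.1--2.2 of \cite{MT}), so no reduction to ``$q_1=q_2=q$'' is needed or even meaningful.

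The second gap is exactly the bookkeeping you defer: you claim $\chi$ is additive ``after accounting for the gluing annuli (which have Euler characteristic zero),'' but the pieces are glued along \emph{arcs} in $A$, so
\[
\chi(S)=m_1\chi(S_1)+m_2\chi(S_2)-|S\cap A|,\qquad |S\cap A|=m_1|\partial S_1|q_1=m_2|\partial S_2|q_2=|\partial S|\,q ,
\]
and it is precisely the subtracted term $|S\cap A|$ that produces the $-1$ in
$\chi(S)/(|\partial S|q)=2b_1(n_0)+2b_2(n_0)-1=2\bigl(b_1(n_0)+b_2(n_0)-\tfrac12\bigr)$, matching the $-\tfrac12$ shift in the linear coefficient coming from the unnormalized colored Jones function. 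With your additive $\chi$ the two sides would differ by exactly this $-1$, so the $SS(n_0)$ identity would fail; the $-\tfrac12$ is not ``absorbed'' by normalization, it is accounted for by the arc gluing. Finally, the glued surface may be non-orientable, in which case one must pass to the frontier of a twisted $I$--bundle neighborhood (as in \cite[Lemma~5.1]{BMT}), which preserves the slope and the ratio $\chi/(|\partial\cdot|q)$; your proposal does not address this step.
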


\begin{proof}

Write 
\[\delta_{K_i}(n) = a_i(n) n^2 + b_i(n) n + c_i(n)\]
for each $i=1,2$.
Since $K_i$ has a single Jones slope $p_i/q_i$ ($q_i > 0$), 
$a_i(n)=a_i$ is constant.
By hypothesis, $p_i/q_i$ satisfies $SS(n_0)$ for some $n_0 \in \N$.
In particular, there is an essential surface $S_i$ properly embedded in $E(K_i)$ that has boundary slope $4a_i = p_i/q_i$ 
and satisfies $\displaystyle \frac{\chi(S_i)}{|\partial S_i| q_i} = 2 b_i(n_0)$. 

First we recall from the proof of \cite[Lemma 2.1]{MT} that
\begin{align*}
\delta_{K_1 \sharp K_2}(n) 
&= \delta_{K_1}(n) + \delta_{K_2}(n) - \frac{1}{2}n + \frac{1}{2} \\
&= (a_1(n) + a_2(n))n^2 + (b_1(n) + b_2(n) -\frac{1}{2}) n + c_1(n) + c_2(n) + \frac{1}{2} \\
&=  (a_1 + a_2)n^2 + (b_1(n) + b_2(n) -\frac{1}{2}) n + c_1(n) + c_2(n) + \frac{1}{2}
\end{align*}
though we have the extra terms $- \frac{1}{2}n + \frac{1}{2}$ due to our use of the unnormalized 
colored Jones function which is addressed in \cite{KT}.    
Thus the quadratic quasi-polynomial of this connected sum also has a constant quadratic term and hence a single Jones slope. 
Note that its linear term is $b(n)=b_1(n) + b_2(n) -\frac{1}{2}$ and has period that divides the least common multiple of the periods of $b_1$ and $b_2$.  
(Actually, let $p$ be the period of $b(n)$ and $p'$ be the least common multiple of the periods $p_i$ of $b_i(n)$.
Then $b(n+p') = b_1(n+p') + b_2(n+p') = b_1(n) + b_2(n) = b(n)$, and thus $p \le p'$. 
Writing $p' = pk+ r\ (0 \le r < p)$, 
we have $b(n) = b(n+p') = b(n + (pk+ r)) = b((n+ r) + pk) = b(n+r)$ for any $n$.  
This shows that $b(n)$ has period $r < p$, and hence $r=0$ and $p$ divides $p'$.)

We next show that $K_1 \sharp K_2$ satisfies $SS(n_0)$. 

Recall that $E(K_1 \sharp K_2)$ is decomposed into $E(K_1)$ and $E(K_2)$ along an essential annulus $A$ whose core is meridian of $K_1$ and $K_2$; 
see \cite[Figure 2.1]{MT}. 
Gluing $m_1$ copies of $S_1$ and $m_2$ copies of $S_2$ along $A$, 
we obtain a (possibly non-orientable) surface $S = m_1 S_1 \cup m_2 S_2$ in $E(K_1 \sharp K_2)$. 
The gluing condition requires that $m_1 |\partial S_1 | q_1 = m_2 |\partial S_2| q_2$.  
As shown in \cite[Lemma 2.2]{MT} the surface $S$ is essential, 
and the boundary slope of $S$ is $p_1/q_1 + p_2/q_2$ which equals $4(a_1 + a_2)$. 
Let us write $p/q = p_1/q_1 + p_2/q_2$ for coprime integers $p, q$ with $q > 0$.

We note that, by construction, 
$m_1 |\partial S_1 | q_1 = m_2 |\partial S_2| q_2$   
equals the number of arcs of $S \cap A$. 
Hence it must also coincide with $|\partial S|q$.   
Thus we have
\begin{itemize}
\item
$m_1 |\partial S_1 | q_1 = m_2 |\partial S_2| q_2 = |S \cap A| = |\partial S| q$, and
\item
$\chi(S) = \chi(m_1 S_1 \cup m_2 S_2) = m_1 \chi(S_1) + m_2 \chi(S_2) - |S \cap A|$.
\end{itemize}
Then it follows that 
\begin{align*}
\frac{\chi(S)}{|\partial S| q} 
&= \frac{m_1 \chi(S_1) + m_2 \chi(S_2) - |S \cap A|}{|\partial S| q} \\
&= \frac{m_1\chi(S_1)}{m_1|\partial S_1|q_1} + \frac{m_2 \chi(S_2)}{m_2 |\partial S_2|q_2} -\frac{|\partial S| q}{|\partial S| q} \\
&= \frac{\chi(S_1)}{|\partial S_1|q_1} + \frac{\chi(S_2)}{|\partial S_2|q_2} -1 \\
&= 2b_1(n_0) + 2b_2(n_0) -1 \\
&= 2(b_1(n_0) + b_2(n_0) -1/2)\\
&= 2b(n_0).
\end{align*}

If $S$ is non-orientable, 
we need to replace $S$ by the frontier $\widetilde{S}$ of a tubular neighborhood $N(S)$ of $S$ in $E(K_1 \sharp K_2)$, 
which is a twisted $I$--bundle over $S$. 
However, as described in \cite[Lemma~5.1]{BMT},  
$\widetilde{S}$ also has boundary slope $p/q$ and 
\[
\frac{\chi(\widetilde{S})}{|\partial \widetilde{S}| q}  
= \frac{\chi(S)}{|\partial S| q} 
= 2(b_1(n_0) + b_2(n_0) -1/2) = 2b(n_0)
\]
as desired.
\end{proof}

\section{The Strong Slope Conjecture and cablings --- a revision of Kalfagianni-Tran's results} 
\label{cabling}

For coprime integers $p,q$ with $q \neq 0$, 
let $K_{p,q}$ be the $(p,q)$-cable knot of a knot $K$. 
That is, $K_{p,q}$ is a curve in the boundary of a solid torus neighborhood of $K$ that, 
with respect to the standard meridian and longitude of $K$, 
winds $p$ times meridionally and $q$ times longitudinally. 
Since $K_{p,\pm1}=K$, we assume $|q|>1$.  
Because the colored Jones function is unchanged by reversing the orientation of a knot, 
we restrict to considering unoriented knots.
Thus we may further assume $q > 1$; see the second paragraph of \cite[Proof of Proposition~3.2]{KT}.

As we mentioned in Remark~\ref{d+=delta}, to prove Theorem~\ref{maximal_set} and Corollary~\ref{strong_slope_conjecture_graph} we need to take iterated cables, 
and thus we need to rectify \cite[Proposition~3.2]{KT} so that we can apply it repeatedly. 
We present our Proposition~\ref{max-degree} as a replacement for \cite[Proposition~3.2]{KT}. 
Our proposition requires the extra technical assumption of the Sign Condition given in Definition~\ref{sign}.  
Our proof of Proposition~\ref{max-degree} below follows the spirit of Kalfagianni and Tran's approach to \cite[Proposition~3.2]{KT}.

\subsection{The Sign Condition and a cabling formula}\label{sec:signcabling}

\begin{definition}[\textbf{The Sign Condition}]
\label{sign}
Let $\varepsilon_n(K)$ be the sign of the coefficient of the term of the maximum degree  of $J_{K, n}(q)$. 
A knot $K$ satisfies the \textit{Sign Condition} if 
$\varepsilon_m(K) = \varepsilon_n(K)$ for $m \equiv n\ \mathrm{mod}\ 2$.
\end{definition}

In Propositions~\ref{ab-twisted-ab-torus} and \ref{prop-BadequateSignCond} we show that 
torus knots and $B$--adequate knots satisfy the Sign Condition.   
In Section~\ref{no_Sign_Condition} we exhibit some knots that fail the Sign Condition. 

\begin{proposition}
\label{max-degree}
Let $K$ be a knot such that 
$\delta_K(n)=a(n)n^2+b(n)n+c(n)$ has period $\le 2$ with $b(n)\le 0$.
Suppose $\frac{p}{q} \neq 4 a(n)$ if $b(n)=0$, 
and $K$ satisfies the Sign Condition.  
Then 
$
\delta_{K_{p,q}}(n) =A(n)n^2+B(n)n+C(n)
$
has period $\le 2$ with
\[
 \{A(n) \} \subset \{q^2 a(q(n-1)+1)\} \cup \{\frac {pq} 4\} \quad and \quad 
 B(n)\le 0.
\]
Explicitly, we have 
\begin{equation*}
  \delta_{K_{p,q}}(n)=
   \left\{ \begin{array}{ll} 
           q^2 a(i) n^2 +\left(q b(i)+\frac {(q-1)(p-4qa(i))} 2\right) n & \\
          \quad\quad +\left(a(i)(q-1)^2-(b(i)+\frac p 2)(q-1)+c(i)\right) & \mbox{for } \frac p q< 4a(i), \\
          \frac {pq(n^2-1)} 4 + C_{{\sigma}}(K_{p,q})  &  \mbox{for }\frac p q  \geq  4a(i),
          \end{array} \right. 
\end{equation*}
where $i \equiv_{(2)} q(n-1)+1$, ${\sigma} \equiv_{(2)} n$, 
and $C_{{\sigma}}(K_{p,q})$
is a number that only depends on the knot $K$, 
the numbers $p$ and $q$, and the parity $\sigma$ of $n$. 
Furthermore, 
$K_{p, q}$ also satisfies the Sign Condition. 
\end{proposition}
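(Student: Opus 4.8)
The plan is to follow the Kalfagianni--Tran strategy for \cite[Proposition~3.2]{KT} while carefully tracking when the maximum degree of the colored Jones polynomial of $K$ is genuinely governed by $\delta_K$. First I would recall the cabling formula for the colored Jones function: $J_{K_{p,q},n}(q)$ can be written as a finite sum $\sum_{k} c_{n,k}\, q^{\ast}\, J_{K,k}(q)$ where the summation index $k$ ranges over an arithmetic progression with step $2$ determined by $n$, $p$, $q$, together with explicit framing/twist prefactors $q^{\ast}$ coming from the $(p,q)$ torus-knot pattern. Because $\delta_K(n)$ has period $\le 2$, the contribution $\deg\bigl[q^{\ast} J_{K,k}(q)\bigr]$ is, for each admissible $k$, a quadratic function of $k$ (depending only on the parity of $k$), and substituting $k = q(n-1)+1, q(n-1)-1, \dots$ turns this into a quadratic function of $n$ whose parity class is $i \equiv_{(2)} q(n-1)+1$. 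Maximizing over $k$ in the progression is a discrete optimization of a quadratic: the maximum is attained either at the endpoint $k = q(n-1)+1$ (giving the $\frac pq < 4a(i)$ branch, where the leading coefficient is $q^2 a(i)$) or in the interior at a vertex independent of $n$ up to parity (giving the $\frac pq \ge 4a(i)$ branch, where the leading coefficient is $\frac{pq}{4}$ and the $n$-dependence is forced by the prefactor to be $\frac{pq(n^2-1)}{4} + C_\sigma$). The condition $\frac pq \ne 4a(i)$ when $b(i)=0$ is exactly what rules out a tie between the two branches and guarantees a clean dominant term; the hypothesis $b(n)\le 0$ ensures the linear coefficient estimate $B(n)\le 0$ survives after collecting terms, and that $\{A(n)\}$ lies in the asserted two-element set.

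The subtle point, and the reason the Sign Condition is imposed, is that the cabling sum is a sum of polynomials, so the degree of the sum equals the maximum of the degrees of the summands \emph{only if} the top-degree coefficients of the dominant summands do not cancel. Kalfagianni and Tran implicitly used $d_+[J_{K,n}(q)] = \delta_K(n)$ for all $n>0$ to sidestep this; here I can only assume equality for $n \ge N_K$, so for small indices $k < N_K$ appearing in the progression I have no a priori control of $\deg[J_{K,k}(q)]$. The Sign Condition says $\varepsilon_m(K) = \varepsilon_n(K)$ whenever $m \equiv n \bmod 2$, i.e.\ the sign of the leading coefficient of $J_{K,k}(q)$ depends only on the parity of $k$. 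In the dominant branch all the competing summands have indices $k$ lying in a single parity class (the progression has step $2$), so their leading coefficients all share the same sign and, crucially, the explicit torus-knot prefactors $q^{\ast}$ have signs that are also controlled by parity; hence no cancellation can occur among the top-degree terms, and the degree of the sum is exactly the maximum of the degrees. For all sufficiently large $n$ every relevant $k$ exceeds $N_K$ so the degree computation is unconditional and yields a quadratic quasi-polynomial of period $\le 2$; this is the asserted $\delta_{K_{p,q}}(n)$, valid as $d_+$ for large $n$, which is all that the definition of $\delta$ requires.

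For the final sentence --- that $K_{p,q}$ itself satisfies the Sign Condition --- I would argue directly from the cabling sum once more: for each $n$ with fixed parity $\sigma$, the leading coefficient of $J_{K_{p,q},n}(q)$ is (up to a nonzero prefactor of parity-determined sign) the leading coefficient of $J_{K,k}(q)$ at the unique maximizing index $k$, whose parity is $i \equiv_{(2)} q(n-1)+1$ and hence depends only on $\sigma$; by the Sign Condition for $K$ this leading coefficient has a sign depending only on $i$, so $\varepsilon_n(K_{p,q})$ depends only on $\sigma$, which is precisely the Sign Condition for $K_{p,q}$. The main obstacle throughout is the non-cancellation argument: one must verify that in the maximizing branch the dominant terms of the cabling sum really are finite in number and all have leading coefficients of a single sign, combining the Sign Condition for $K$ with an explicit sign analysis of the torus-knot coefficients $c_{n,k}$ and the twist prefactors. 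Once that is pinned down, the leading and linear coefficient bookkeeping reduces to the quadratic optimization and the substitution $k \mapsto q(n-1)+1 - 2j$, which is routine.
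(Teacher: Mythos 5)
Your overall strategy coincides with the paper's: invoke the cabling formula, use the Sign Condition together with the constant parity of the colors $2qk+1$ (for fixed parity of $n$) to rule out cancellation among top-degree terms, so that $d_+[J_{K_{p,q},n}(v)]$ equals $pq(n^2-1)/4$ plus the maximum of $f(k)=-pk(qk+1)+d_+[J_{K,|2qk+1|}(v)]$, then optimize the resulting quadratics in $k$ according to the sign of $p/q-4a(i)$, and finally read off the Sign Condition for $K_{p,q}$ from the non-cancellation of the dominant term. All of that matches the paper.

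However, there is a genuine gap where you dispose of the colors below the threshold $N_K$. You claim that ``for all sufficiently large $n$ every relevant $k$ exceeds $N_K$, so the degree computation is unconditional.'' This is false: for every $n$ the index set $\cS$ contains $k$ with $|k|$ small (e.g.\ $k=0$ or $k=\pm\frac12$), so summands $J_{K,2qk+1}(v)$ with $|2qk+1|$ small --- exactly those for which $d_+$ need not equal $\delta_K$ --- never leave the cabling sum, no matter how large $n$ is. The paper handles this by partitioning $\cS$ into $\cS^{\pm}$, where $f(k)$ is given by the explicit quadratics $g_i^{\pm}(k)$, and a middle piece $\cS^0$ contained in a fixed finite set independent of $n$, so that the unknown values $f(k)$, $k\in\cS^0$, form a bounded finite collection. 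In the branch $p/q<4a(i)$ one must then argue that the endpoint value $g_i^+(\frac{n-1}{2})$ grows with $n$ and eventually exceeds both this fixed bound and $g_i^-(-\frac{n-1}{2})$; in the branch $p/q\ge 4a(i)$ the maximum is not at the parabola's vertex as you describe (since $b(i)\le 0$ the quadratics are monotone on $\cS^{\pm}$ and are maximized at the endpoints of $\cS^{\pm}$ nearest $0$), and moreover the maximum may be realized by one of the uncontrolled $\cS^0$ terms --- which is precisely why $C_{\sigma}(K_{p,q})$ is defined as a maximum including those unknown values and cannot be made explicit. Without this partition-and-boundedness argument both branches of your computation are incomplete, and this is exactly the point (cf.\ Remark~\ref{d+=delta}) that distinguishes the proposition from Kalfagianni--Tran's original, which assumed $d_+[J_{K,n}(q)]=\delta_K(n)$ for all $n>0$.
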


\begin{proof}
It will be convenient to extend the colored Jones function to negative integers by the convention that 
$J_{K, -m}(v) = - J_{K, m}(v)$ for integers $m>0$ 
(In the following we use the variable $v$ instead of $q$ to distinguish from the cabling parameter.)  
Note that, with this convention, 
$d_+[J_{K, -m}(v)] = d_+[J_{K, m}(v)]$ for all integers $m\neq0$.  
For notational concision, 
let us also write the periodic coefficients of $\delta_K(m)$ as $a_m = a(m)$, $b_m = b(m)$, and $c_m=c(m)$ for integers $m$ considered mod $2$.  
Furthermore, recall that since the knot $K_{p,q}$ is a non-trivial cable of $K$, 
and our knots are unoriented, 
we may assume $q > 1$; see the first paragraph of Section~\ref{cabling}.

A formula for the colored Jones function of a cable of a component of a link is given in \cite{V1,V2}.  
It is presented for the cable of a knot and adapted to our current notations and normalizations in \cite[Equation (3.2)]{KT} which we now recall.  
To do so we must introduce the following sets. 
For each integer $n>0$, 
 let $\cS$ be the finite set of all numbers $k$ such that
\[
|k|\le \frac {n-1} 2 \quad \mathrm{and} \quad 
k\in \left\{ \begin{array}{ll} 
            \Z & \mathrm{if} \, n \,\mathrm{is \, odd},\\
            \Z+\frac 1 2 & \mathrm{if} \, n \,\mathrm{is \, even}.
            \end{array}
\right.
\]
That is,
\[ \cS = \left\{-\frac{n-1}{2},\ -\frac{n-1}{2}+1,\ -\frac{n-1}{2}+2,\ \dots,\ \frac{n-1}{2} -1,\ \frac{n-1}{2}\right\}.\]
Then, from \cite{V1,V2} and following \cite[Equation (3.2)]{KT}, for $n>0$ we have 
\begin{equation}
\label{cabling.formula}
 J_{K_{p,q}, n}(v)=v^{pq(n^2-1)/4}\sum_{k\in \cS} v^{-pk(qk+1)}J_{K, 2qk+1}(v), 
\end{equation}
where we use the convention introduced above that 
$J_{K, -m}(v) = -J_{K, m}(v)$ for integers $m>0$. 

Since we wish to determine $\delta_{K_{p,q}}(n)$, 
we must determine $d_+[J_{K_{p,q}, n}(v)]$ for  $n \gg 0$.
Based on Formula~(\ref{cabling.formula}), 
\begin{equation}
\label{cabling.max}
d_+[J_{K_{p,q}, n}(v)] = pq(n^2-1)/4 + \max_{k \in {\cS}} \{-pk(qk+1) + d_+[J_{K, |2qk+1|}(v)] \}
\end{equation}
assuming this maximum is uniquely realized.  
If this maximum is not uniquely realized, 
then there may be a cancellation between terms of maximal degrees (corresponding to the highest horizontal dotted line in Figure~\ref{fig:cancellation_fig}). 
This cancellation may cause infinitely many further cancellations in the sum of Formula~(\ref{cabling.formula}) as illustrated in Figure~\ref{fig:cancellation_fig}.   
Observe that, for integers $n>0$ of a given parity, 
the parity of $2qk+1$ for $k \in \cS$ is constant. 
More precisely, if $n$ is odd, then $2qk + 1$ is odd, 
and if $n$ is even, then $2qk+1$ is odd or even according to whether $q$ is even or odd, respectively.
In particular, since $\max \cS = \frac{n-1}{2}$, we have $2qk+1 \equiv_{(2)} q(n-1)+1$.
Hence, the Sign Condition for $K$ ensures that no cancellations occur among terms of maximal degree.  
Thus equation~\ref{cabling.max} holds.

\begin{figure}[h]
	\centering
	\includegraphics[width=0.8\textwidth]{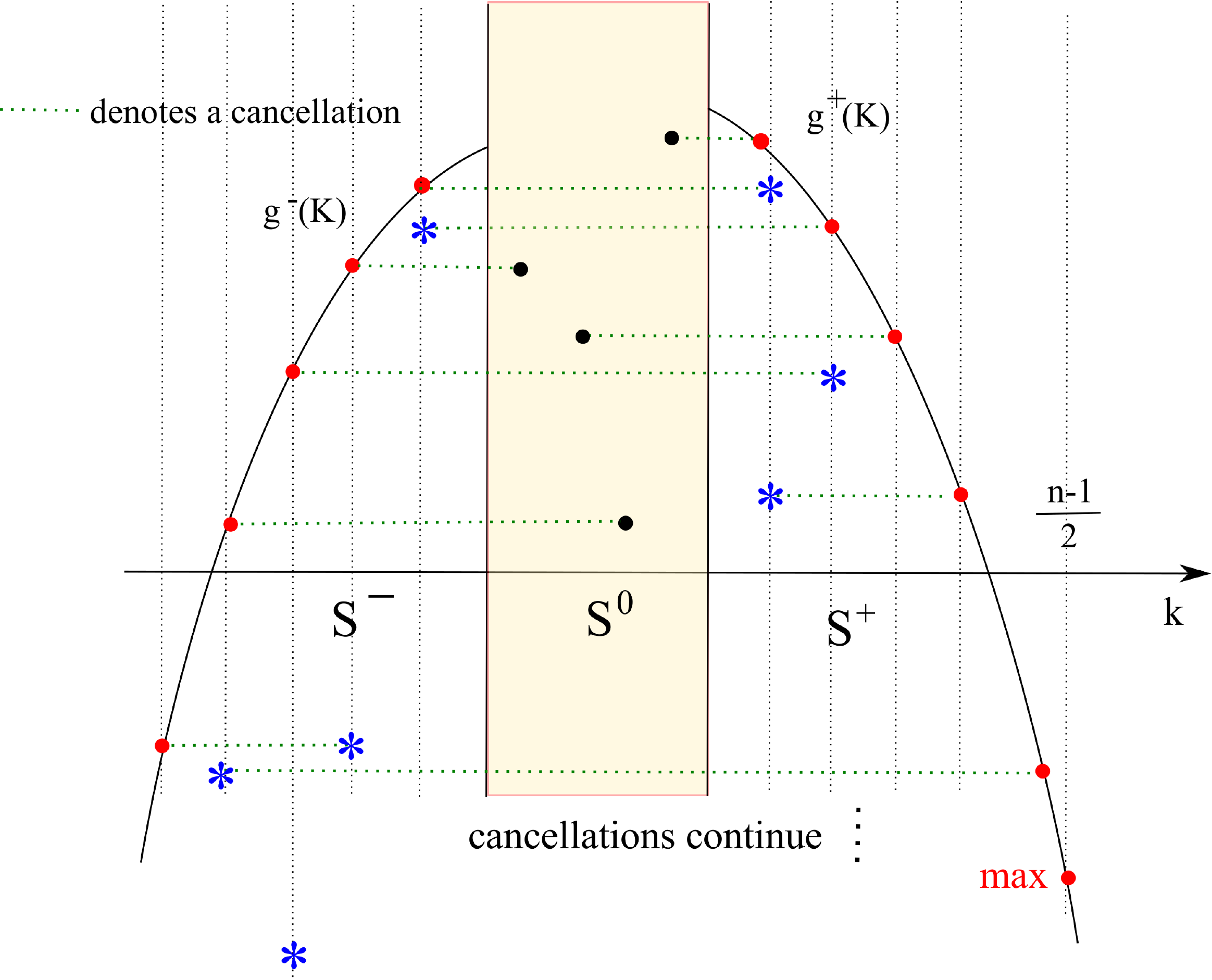}
	\caption{$*$ denotes $m(k)$ for a term $a_{k, m}v^{m(k)}$ of $v^{-pk(qk+1)} J_{K, 2qk+1}(v)$ 
	with $m(k) < f(k)$.}
	\label{fig:cancellation_fig}
\end{figure}

\medskip
First define 
\[
 f(k) = -pk(qk+1) + d_+[J_{K, |2qk+1|}(v)] 
\]
for $k\in \cS$.   
Set $N_K \geq 0$ to be the first integer such that $d_+[J_{K, |m|}(v)]=\delta_K(|m|)$ for all integers $m$ with $|m| \ge 2q N_K +1$. 
Noting that $|2q(-N_K - \frac{1}{2}) + 1| \geq 2qN_K+1 > |2q(-N_K) + 1|$, 
partition $\cS$ into the three subsets 
\[\cS^- = \cS \cap (-\infty, -N_K-\frac{1}{2}], \quad \cS^0 = \cS \cap (-N_K-\frac{1}{2}, N_K), \quad \mbox{ and } \cS^+ = \cS \cap [N_K, \infty). \]
Note that when $n=1$, $\cS = \{ 0 \}$ and $\cS^- = \emptyset$.

Then, considering the quadratic quasi-polynomials for integers and half-integers $k$
\begin{align*}
g^+(k) &= -pk(qk+1) + \delta_K(2qk+1) \\
		&= (-pq + 4 q^2 a_{m}) k^2 + (-p+4q a_{m} + 2q b_{m}) k +(a_{m}+ b_{m}+c_{m})\\
		& \quad \mbox{for } k \geq 0 \mbox{ and } m \equiv_{(2)} 2qk+1
\end{align*}

and
\begin{align*}
g^-(k)  &= -pk(qk+1) + \delta_K(|2qk+1|)\\
         & = -pk(qk+1) + \delta_K(-2qk-1)\\
	&= (-pq + 4 q^2 a_{m}) k^2 + (-p+4q a_{m} - 2q b_{m}) k +(a_{m}- b_{m}+c_{m})\\
	& \quad \mbox{for } k < 0 \mbox{ and } m \equiv_{(2)} |2qk+1|,
\end{align*}
define the quadratic real polynomials $g_m^{\pm}(x)$ for integers $m\pmod 2$ by
\[g_m^{\pm}(x)=(-p q+4 q^2 a_m)x^2+(-p+4q a_m \pm 2q b_m)x+a_m\pm b_m+c_m.\]
Hence for integers and half-integers $k$, 
we have $g^\pm(k) = g^\pm_{m}(k)$ where $m \equiv_{(2)} |2qk+1| $ and $\pm$ means $+$ if $k \geq 0$ and $-$ if $k <0$.
Thus, on the subsets $\cS^\pm$, we have 
\[
f(k)= g_m^\pm(k)  \quad \mathrm{ if }\, k\in \cS^\pm\ \mathrm{ and }\ m\equiv_{(2)} |2qk+1|.
\]
While we have little information about $f(k)$ for $k \in \cS^0$, 
it belongs to only a finite set of values since 
\[\cS^0 \subset \left[-N_K, N_K - \frac 1 2\right] \cap \frac 1 2 \Z\]
for all $n>0$.

\bigskip

Recall that $f(k)$ is defined on half-integers $k$ if $n$ is even and 
defined on integers $k$ if $n$ is odd. 
So to clarify the role of the parity of $n$, 
for each parity $\sigma \in \{0,1\}$ we define the function $f_{{\sigma}}(k)$ so that 
$f(k) = f_{{\sigma}}(k)$ if ${\sigma} \equiv_{(2)} n$. 
Then $f_0$ is defined on half-integers, while $f_1$ is defined on integers. 
More explicitly, this is the function 
\[
f_{{\sigma}}(k)=\begin{cases}
g_i^+(k) &  \mathrm{if} \, k\in \cS^+,\\
g_i^-(k) &  \mathrm{if} \, k\in \cS^-, 
\end{cases}
\]
where $i \equiv_{(2)} q(n-1)+1$.

Henceforth regard ${\sigma}$ as fixed choice of parity.
Note that the parity $i$ is fixed if we vary $n$, maintaining $n \equiv_{(2)} {\sigma}$.
Using (\ref{cabling.max}) and $f_{{\sigma}}(k)$, 
we now proceed to determine $d_+[J_{K_{p,q}, n}(v)]$ for suitably large $n$ such that $n \equiv_{(2)} {\sigma}$.

\smallskip
\noindent 
{\bf Case 1.} 
Assume $\frac p q<4a_i$.
Then $-pq+4q^2a_i>0$,
and so the functions given by the quadratic polynomials $g_i^+(x)$ and $g_i^-(x)$ are concave up.
Hence, for any sufficiently large integer $n$,  
$g_i^+(k)$ is maximized on $\cS^+$ at $k=\frac {n-1} 2$
and $g_i^-(k)$ is maximized on $\cS^-$ at $k=-\frac {n-1} 2$.
Note that 
\[
g_i^+(\frac {n-1} 2)-g_i^-(-\frac {n-1} 2)=(-p+4qa_i)(n-1)+2b_i>0
\]
for sufficiently large integer $n$. 
Therefore, $f_{{\sigma}}(k)$ is maximized on the set $\cS^+ \cup \cS^-$ 
at $k=\frac {n-1} 2$. 

Since the elements of $\cS^0$ belong to a fixed finite set that is independent of $n$, 
the maximum of $f_{{\sigma}}(k)$ on $\cS^0$ has an upper bound that is independent of $n$.
Thus, for a sufficiently large integer $n$, 
we can be assured that $g_i^+(\frac {n-1} 2)$ exceeds this bound.  
Hence
\[ \max_{k\in \cS} f_{{\sigma}}(k) = f_{{\sigma}}(\frac{n-1} 2) = g_i^+(\frac {n-1} 2). \]
Then, Formula~(\ref{cabling.formula}) implies that for sufficiently large integer $n$
\begin{eqnarray*}
	d_+[J_{K_{p,q}, n}(v)]&=& \frac {pq(n^2-1)} 4+ g_i^+(\frac {n-1} 2) \\
	&=& q^2 a_i n^2+\left(qb_i+\frac {(q-1)(p-4qa_i)} 2\right)n \\
	&& \quad \quad +\left(a_i(q-1)^2-(b_i+\frac p 2)(q-1)+c_i\right). 
\end{eqnarray*}
Since we assumed that $q>1$, we have that 
\[
B(n)=qb_i+\frac {(q-1)(p-4qa_i)} 2<0, 
\]
and the conclusion follows in this case.

\smallskip

\noindent 
{\bf Case 2.} 
Assume $p/q>4a_i$.  
Then $-pq+4q^2a_i<0$, 
and so the function given by the quadratic polynomial $g_i^+(x)$ is concave down 
and attains its maximum at 
\[
x=x_0:=-\left( \frac 1 {2q} + \frac {b_i}{-p+4qa_i}\right). 
\]
Since $b_i\le 0$, we have $x_0<0$. 
This implies that $g_i^+(x)$ is a strictly decreasing function on $[0,\infty )$.   
Similarly, the quadratic polynomial $g_i^-(x)$ is concave down 
 and attains its maximum at 
\[ 
x=x_0':=-\left( \frac 1 {2q} - \frac {b_i}{-p+4qa_i}\right). 
\]
Since $b_i\le 0$, we have $x_0' > -\frac 1 2$. 
This implies that $g_i^-(x)$ is a strictly increasing function on $(-\infty,-\frac 1 2]$. 
Thus $g_i^+(k)$ is maximized on $\cS^+$ at $k^+ =\min \cS^+$ and $g_i^-(k)$ is maximized on $\cS^-$ at $k^- = \max \cS^-$.

Since $| \cS^0 | \le 2N_K$, 
there are at most $2N_K$ values $f(k)$ for $k \in \cS^0$, 
and thus we may take $M_0 = \max \{ f(k)\ |\ k \in \cS^0 \}$. 
Now let us put 
$C_{\sigma}(K_{p, q}) = \max\{ f_{{\sigma}}(k^+), f_{{\sigma}}(k^-), M_0 \}$. 
Formula~(\ref{cabling.max}) implies that 
\[
d_+[J_{K_{p,q}, n}(v)]=\frac {pq(n^2-1)} 4+ C_{{\sigma}}(K_{p,q})
\]
for sufficiently large integer $n$ 
with ${{\sigma}} \equiv_{(2)} n$. 

Note that $B(n)=0$.  
Hence the conclusion follows in this case too.

\smallskip

\noindent 
{\bf Case 3.} Assume $p/q=4a_i$ and $b_i<0$.
Then $-p+4qa_i=0$ so that
\[g_i^{\pm}(x)=\pm (2q b_i)x+a_i\pm b_i+c_i.\]
Since $q > 1$ and $b_i < 0$, $g_i^+(x)$ is strictly decreasing and $g_i^-(x)$ is strictly increasing.  
Thus $g_i^+(k)$ is maximized on $\cS^+$ at $k^+ = \min \cS^+$ and $g_i^-(k)$ is maximized on $\cS^-$ at $k^- =\max \cS^-$.

As in Case 1, 
let $M_0$ be $\max \{ f_{{\sigma}}(k)\ |\ k \in \cS^0 \}$ and put 
$C_{{\sigma}}(K_{p, q}) = \max \{ f_{{\sigma}}(k^+), f_{{\sigma}}(k^-), M_0 \}$. 
Then 
\[
d_+[J_{K_{p,q}, n}(v)]=\frac {pq(n^2-1)} 4+ C_{{\sigma}}(K_{p,q})
\]
for sufficiently large integer $n$ 
with ${{\sigma}} \equiv_{(2)} n$.  

\medskip

Finally we show that $\varepsilon_m(K_{p, q}) = \varepsilon_n(K_{p, q})$ for $m \equiv n\ \mathrm{mod}\ 2$. 
From the formula (\ref{cabling.formula}), $J_{K_{p,q}, n}(v)$ has the following form 
\begin{equation*}
 J_{K_{p,q}, n}(v)=v^{pq(n^2-1)/4}\sum_{k\in \cS} v^{-pk(qk+1)}J_{K, 2qk+1}(v). 
\end{equation*}
Since $2qk+1 \equiv q(n-1)+1 \pmod 2$ and the parity of $2qk+1$ for $k \in \cS(n)$ is constant, 
from the assumption for $\varepsilon_n(K)$, 
cancellations in the proof of Proposition 3.1 do not happen and 
we can see that the colored Jones polynomial of $J_{K_{p,q}}(n)$ has the required 
property 
of $\varepsilon_n(K_{p,q})$. 
\end{proof}

\begin{remark}
In Case 3 of the above proof, if we allow $b_i=0$ then $g_i^\pm(x)  =a_i+c_i$, and so it is constant.   
Thus determining $d_+[J_{K_{p,q}, n}(v)]$  from equation~(\ref{cabling.formula}) requires more knowledge of the coefficients of the leading terms in $J_{K, 2qk+1}(v)$ for $2qk+1 \equiv_{(2)} q(n-1)+1$.  
However it is conjectured that $b_i=0$ only when $K$ is cabled \cite[Conjecture 5.1]{KT} 
(via the Strong Slope Conjecture and the Cabling Conjecture \cite{GAS}).  
In such a case, $4a_i$ is an integer so that $p/q \neq 4a_i$ for $q>1$.  
Hence this remaining situation conjecturally does not happen.
\end{remark}

\subsection{Condition $\delta$, cabling, and the Strong Slope Conjecture}
It is also convenient to collect some common assumptions on $\delta_K(n)$ for a knot $K$. 

\begin{definition}[\textbf{Condition $\delta$}]
\label{condition}
We say that a knot $K$ satisfies \textit{Condition $\delta$} if 
\begin{enumerate}
\item
$\delta_K(n)=an^2 + bn + c(n)$ has period at most $2$, 
\item
$b \le 0$, and 
\item $4a  \in \mathbb{Z}$. 
\end{enumerate}
(Note that the trivial knot does {\em not} satisfy Condition $\delta$ because it has $b=1/2$.)
\end{definition}

A version of the following proposition is essentially given in \cite[Theorem 3.9]{KT}.

\begin{proposition}
\label{cable-max}
Let $K$ be a knot that satisfies Condition $\delta$, the Sign Condition, and the Strong Slope Conjecture.  
Then a non-trivial cable $K_{p,q}$ satisfies Condition $\delta$, the Sign Condition, and the Strong Slope Conjecture.
\end{proposition}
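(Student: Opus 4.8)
The plan is to derive the statement from Proposition~\ref{max-degree} together with the surface constructions underlying \cite[Theorem~3.9]{KT} (which run in the spirit of \cite[Lemma~2.2]{MT} and Theorem~\ref{strong_slope_conjecture_sum}). Throughout we may assume $q>1$, as in Section~\ref{cabling}.

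\emph{Step 1: Condition $\delta$ and the Sign Condition for $K_{p,q}$.} Writing $\delta_K(n)=an^2+bn+c(n)$, Condition~$\delta$ gives period at most $2$, $b\le 0$, and $4a\in\Z$. Since $\gcd(p,q)=1$ and $q>1$, the rational $p/q$ is not an integer, so $p/q\ne 4a$; hence the extra hypothesis of Proposition~\ref{max-degree} (that $p/q\ne 4a(n)$ whenever $b(n)=0$) holds, and I would apply that proposition to obtain $\delta_{K_{p,q}}(n)=A(n)n^2+B(n)n+C(n)$ of period at most $2$ with $B(n)\le 0$ and $\{A(n)\}\subset\{q^2a(q(n-1)+1)\}\cup\{pq/4\}$. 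As $a$ is constant, $a(q(n-1)+1)=a$, so the dichotomy ``$p/q<4a(i)$'' versus ``$p/q\ge 4a(i)$'' selecting the branch of the explicit formula is independent of the parity class $i$; thus a single branch holds for all $n$, and inspecting it shows $A(n)\equiv A$ and $B(n)\equiv B$ are constants with $4A\in\{4q^2a,\,pq\}\subset\Z$ and $B\le 0$. This gives Condition~$\delta$ for $K_{p,q}$, while the Sign Condition for $K_{p,q}$ is the final assertion of Proposition~\ref{max-degree}.

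\emph{Step 2: the Strong Slope Conjecture for $K_{p,q}$.} Since $A$ is constant, $K_{p,q}$ has the single Jones slope $4A$, so it suffices to produce one $n\in\N$ for which $4A$ satisfies $SS(n)$. I would use that $K$ satisfies the Strong Slope Conjecture and has the single Jones slope $4a$: that slope satisfies $SS(n_0)$ for some $n_0$, so there is an essential surface $S\subset E(K)$ of boundary slope $4a=p_0/q_0$ ($q_0>0$, coprime) with $\chi(S)/(|\partial S|\,q_0)=2b$. Decompose $E(K_{p,q})=E(K)\cup_T C$ along $T=\partial N(K)$, where $C$ is the Seifert fibered $(p,q)$--cable space with torus boundary components $T$ and $\partial E(K_{p,q})$. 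When $p/q\ge 4a$ (so $4A=pq$ and $B=0$), take $F$ to be the cabling annulus in $C$: it is essential in $E(K_{p,q})$ with boundary slope $pq=4A$ on $\partial E(K_{p,q})$, and $\chi(F)=0$ matches $B=0$, so $SS(n)$ holds for every $n$. When $p/q<4a$ (so $4A=4q^2a$), I would cable $S$ as in \cite[Theorem~3.9]{KT}: glue $m$ parallel copies of $S$ to an appropriate essential surface in $C$ meeting $T$ in $m|\partial S|\,q_0$ arcs, producing a (possibly non-orientable) essential surface $F\subset E(K_{p,q})$ of boundary slope $4A$; splitting $\chi$ over the two pieces along $T$ exactly as in the displayed computation in the proof of Theorem~\ref{strong_slope_conjecture_sum} and substituting $\chi(S)/(|\partial S|\,q_0)=2b$ should reproduce the linear coefficient of the Case~1 formula of Proposition~\ref{max-degree}, i.e. $\chi(F)/(|\partial F|\,Q)=2B$ with $4A=P/Q$ in lowest terms. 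If $F$ is non-orientable, I would replace it by the frontier of a twisted $I$--bundle neighborhood as in \cite[Lemma~5.1]{BMT}, which preserves both the boundary slope and that ratio. In every case $4A$ satisfies $SS(n)$ for a suitable $n$.

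\emph{Main obstacle.} The one substantive step is Step~2 when $p/q<4a$: pinning down the correct essential (horizontal) surface of the cable space $C$ to attach to the copies of $S$, controlling the arc count along $T$ and the induced boundary slope, and verifying that the Euler-characteristic ratio equals $2B$ as dictated by Proposition~\ref{max-degree}. This is the content of \cite[Theorem~3.9]{KT} recast in the present normalization, together with the non-orientability correction of \cite[Lemma~5.1]{BMT}; everything else is bookkeeping.
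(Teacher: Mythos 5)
Your Step 1 is exactly the paper's argument: Condition $\delta$ gives $4a\in\Z$ and $b\le 0$, non-triviality of the cable gives $q>1$ so $p/q\ne 4a$, and Proposition~\ref{max-degree} then yields period $\le 2$, $B\le 0$, $4A\in\{4q^2a,pq\}\subset\Z$, and the Sign Condition for $K_{p,q}$. The cabling-annulus case of Step 2 also matches the paper. The issue is the case $p/q<4a$, which you yourself flag as the ``main obstacle'' and do not carry out. Two points there. First, your proposed bookkeeping --- gluing $m$ parallel copies of $S$ to a surface in the cable space ``meeting $T$ in $m|\partial S|q_0$ arcs'' and splitting $\chi$ ``exactly as in the displayed computation in the proof of Theorem~\ref{strong_slope_conjecture_sum}'' --- is modeled on the wrong decomposition: there the splitting surface is a meridional annulus and the pieces meet it in arcs, which is why the correction term $-|S\cap A|$ appears; here the splitting surface is the torus $T=\partial N(K)$, the pieces meet it in circles (of slope $4a$, an integer, so in fact $q_0=1$ and no copies need be taken), and $\chi$ is simply additive. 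The nontrivial input is not the gluing arithmetic but the Euler characteristic and boundary count of the piece inside the cable space, i.e.\ of the surface that the construction produces; that is precisely what you leave unspecified.

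The paper closes this gap by quoting \cite{KT} directly rather than re-deriving the gluing: the essential surface $S\subset E(K_{p,q})$ with boundary slope $4A=4q^2a$ is the one built from $S_K$ in the proof of \cite[Theorem~2.2]{KT}, and \cite[Corollary~2.8]{KT} gives $|\partial S|=|\partial S_K|$ and
\[
\chi(S)=q\,\chi(S_K)+|\partial S_K|(q-1)(p-4aq),
\]
whence $\chi(S)/|\partial S|=2qb+(q-1)(p-4aq)=2B$ in one line (no non-orientability correction is needed at this stage, and since $a,b$ are constants the choice of $n$ in $SS(n)$ is immaterial). So your outline identifies the right ingredients and the right place in the literature, but as written the decisive computation is only asserted (``should reproduce the linear coefficient''), and the mechanism you propose for it would not produce the $(q-1)(p-4aq)$ term; you need the quantitative statements of \cite[Theorem~2.2 and Corollary~2.8]{KT} (or an equivalent analysis of the horizontal surface in the cable space) to finish.
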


\begin{proof}
Due to Condition $\delta$, $\delta_K(n) = an^2 + bn + c(n)$ has period $\leq 2$ with $4a \in \Z$ and $b\leq 0$.  
Since the cable is non-trivial, we have $q>1$ so that $\frac p q \ne 4a$. 
Then, because $K$ also satisfies the Sign Condition, Proposition~\ref{max-degree} shows that 
$\delta_{K_{p,q}}(n)=An^2+Bn+C(n)$ has period $\le 2$, 

(1) if $\frac p q < 4a$, then $A=q^2a$ and $B=qb+\frac {(q-1)(p-4qa)}{2} $,  

(2) if $\frac p q > 4a$, then $A=\frac {pq} 4$ and $B=0$,  

\noindent 
and so, in both cases, $4A \in \Z$ and $B \le 0$. 

Since $K$ satisfies the Strong Slope Conjecture, there is an essential surface $S_K$ in $E(K)$ with boundary slope $4a$ such that  $\displaystyle{\frac {\chi (S_K)}{|\bdry S_K |}=2b}$. 
In case $(1)$, an essential surface $S$ in $E(K_{p,q})$ realizing the boundary slope $4A=4qa^2$ is obtained from $S_K$ in the proof of \cite[Theorem 2.2]{KT}.  
Furthermore \cite[Corollary 2.8]{KT} shows 
 $|\bdry S|=|\bdry S_K|$ and 
\[
  \chi (S)=q \chi (S_K)+|\bdry S_K|(q-1)(p-4aq),
\]
so that we obtain:
\[
 \frac {\chi (S)}{|\bdry S|}=2bq+(q-1)(p-4aq)=2B.
\]
In case (2), the surface $S$ with boundary slope $pq$ is the cabling annulus, thus 
$\frac {\chi (S)}{|\bdry S|}=0=2B$. 
\end{proof}

\begin{remark}
	\label{period}
	One may note that in Propositions~\ref{max-degree} and \ref{cable-max},
	we need not require that the constant coefficient $c(n)$ has period $\leq 2$ in order to obtain the relevant results about the quadratic and linear coefficients of $\delta(n)$.  
	However, the assumption that $\delta(n)$ has period $\leq 2$ does simplify the presentation and proof of Proposition~\ref{max-degree} from which the other is derived.
	In Appendix~\ref{app:monosloped} we consider the Strong Slope Conjecture for cables of knots of arbitrary period but with single Jones slope and the Sign Condition, updating \cite[Theorem 4.1 and Proposition 4.4]{KT}. 
\end{remark}

\subsection{The degree of the colored Jones polynomial is not always a quadratic quasi-polynomial}
Though it is clarified in the text, the title of \cite[Section 1.2]{Garoufalidis} may have caused a misconception.
Using a specialization of Proposition~\ref{max-degree} for torus knots recorded in Proposition~\ref{cable_torus}, we present Example~\ref{example_d_+ not delta} which concretely demonstrates the existence of cabled knots for which the degree of their colored Jones polynomial are quadratic quasi-polynomials only for suitably large integers $n$.  
In particular, for such a knot $K$, $d_+[J_{K,n}](q) \neq \delta_K(n)$ when $n$ is a positive integer below an explicit cut-off that depends on $K$.  Moreover this cut-off can be arbitrarily large.

\begin{proposition}
\label{d_+ not delta}
There exists a knot $K$ with cable $K'$ such that 
$d_{+}[J_{K, n}(q)] = \delta_K(n)$ for all integers $n>0$, 
but $d_{+}[J_{K', n}(q)] \ne \delta_{K'}(n)$ for integers $n=1,2, \dots, N$ where $N$ is a positive integer. 
Moreover, the knot $K'$ may be chosen so that $N$ is larger than any given number. 
\end{proposition}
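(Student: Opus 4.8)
The plan is to make the cut-off phenomenon explicit by starting from a torus knot and taking a single cable, then computing $d_+[J_{K',n}(q)]$ for small $n$ directly from the cabling formula~(\ref{cabling.formula}) and comparing it to the quasi-polynomial $\delta_{K'}(n)$ furnished by Proposition~\ref{max-degree}. Concretely, I would take $K = T_{r,s}$ a positive torus knot, for which $d_+[J_{K,n}(q)]$ is known exactly (and equals $\delta_K(n)$ for \emph{all} $n>0$, not merely large $n$; this is classical, essentially from \cite{Garoufalidis}), and then take $K' = K_{p,q}$ with $p/q > 4a(K)$, so that we are in Case~2 of Proposition~\ref{max-degree}. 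In that case the leading asymptotics are governed by the cabling annulus, $\delta_{K'}(n) = \frac{pq(n^2-1)}{4} + C_\sigma(K_{p,q})$, where $C_\sigma(K_{p,q})$ is a \emph{constant} (depending only on parity) that arises from the supremum over the ``boundary'' pieces $\cS^\pm$ together with the finite exceptional set $\cS^0$.

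The key point is that for small $n$ the set $\cS = \{-\frac{n-1}{2},\dots,\frac{n-1}{2}\}$ may be entirely contained in $\cS^0$ (indeed $\cS = \cS^0$ whenever $\frac{n-1}{2} < N_K$), so the maximum in~(\ref{cabling.max}) is realized by one of the ``small'' terms $f(k)$, $k\in\cS^0$, and the value of $d_+[J_{K',n}(q)]$ is genuinely different from what the quadratic quasi-polynomial $\delta_{K'}(n)$ predicts — because $\delta_{K'}(n)$ is built from the \emph{asymptotic} maximizers $k^\pm = \mp\frac{n-1}{2}$ living in $\cS^\pm$, which only dominate once $n$ is large enough. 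So the steps are: (1) fix $K = T_{r,s}$ and record $d_+[J_{K,m}(q)] = \delta_K(m)$ exactly, identifying $N_K$ (here $N_K$ can be taken $0$, but what matters downstream is the location of the maximizer); (2) choose $p,q$ with $\frac pq > 4a(K)$ and use Proposition~\ref{max-degree} (via the torus-knot specialization mentioned in the paragraph preceding Proposition~\ref{d_+ not delta}) to write down $\delta_{K'}(n) = \frac{pq(n^2-1)}{4} + C_\sigma(K_{p,q})$; (3) compute, from~(\ref{cabling.formula}), the value $d_+[J_{K',n}(q)] = \frac{pq(n^2-1)}{4} + \max_{k\in\cS} f(k)$ for $n = 1,2,\dots,N$ and observe that $\max_{k\in\cS} f(k) < C_\sigma(K_{p,q})$ for these $n$, because the true maximizer lies outside $\cS$; (4) arrange, by choosing the cabling slope $p/q$ (equivalently $x_0$ in Case~2, which pushes the concave-down vertex of $g_i^+$ far to the left) large relative to $r,s$, that the crossover index $N$ — the first $n$ for which the asymptotic maximizer $\frac{n-1}{2}$ beats all of $\cS^0$ — is as large as desired.

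The main obstacle I expect is step~(4): making $N$ \emph{provably} exceed any prescribed bound requires a quantitative handle on when $g_i^+(\frac{n-1}{2})$ overtakes $M_0 = \max\{f(k) : k \in \cS^0\}$ and the boundary values $f_\sigma(k^\pm)$. Since $g_i^+$ is concave down with vertex at $x_0 = -\left(\frac{1}{2q} + \frac{b_i}{-p+4qa_i}\right) < 0$, the value $g_i^+(\frac{n-1}{2})$ is \emph{decreasing} in $n$ on $[0,\infty)$; so one must be careful — it is not that the leading term $\frac{pq(n^2-1)}{4}$ catches up (it is already incorporated), but rather that the \emph{sub-leading} term $\max_{k\in\cS} f(k)$ transitions from the $\cS^0$-regime to the eventual constant $C_\sigma$. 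The honest description is: $d_+[J_{K',n}(q)] - \frac{pq(n^2-1)}{4} = \max_{k\in\cS} f(k)$ is an eventually-constant sequence in $n$ (for fixed parity), but its initial values, when $\cS\subseteq\cS^0$, differ from that eventual constant; and by making $N_K$ large — which, for \emph{iterated} cables, is exactly the content of the discussion in Remark~\ref{d+=delta} and is controlled by how deeply the exceptional set $\cS^0$ extends — one forces disagreement on $n = 1,\dots,N$ with $N \to \infty$. I would therefore realize the large cut-off not by a single cable of a torus knot but, if necessary, by an iterated cable (or by choosing $T_{r,s}$ with $r/s$ large), using Proposition~\ref{max-degree} repeatedly and tracking how $N_{K'}$ grows at each cabling step; the arithmetic bookkeeping of these thresholds is the technical heart of the argument, but it is elementary once the structural picture above is in place.
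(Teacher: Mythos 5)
Your primary construction does not work, and the mechanism you propose cannot operate under the hypotheses of the statement. If $K$ is a torus knot and $K'=K_{p,q}$ with $p/q>4a$, then you are in Case 2 of Proposition~\ref{max-degree}, where the concave-down polynomials $g_i^{\pm}$ are maximized at $k^+=\min\cS^+$ and $k^-=\max\cS^-$, i.e.\ at points near $0$ (not at $\mp\frac{n-1}{2}$ as you assert); since $d_+[J_{T,m}]=\delta_T(m)$ for all $m>0$ forces $N_K=0$ and $\cS^0=\emptyset$, these maximizers lie in $\cS$ for every $n\geq 1$, so $d_+[J_{K',n}]=\frac{pq(n^2-1)}{4}+C_\sigma$ for \emph{all} $n>0$ — this is exactly Proposition~\ref{cable_torus}, which shows that every single cable of a torus knot (Case 1 or Case 2) satisfies $d_+=\delta$ with no cut-off. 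More fundamentally, your proposed mechanism ``$\cS\subseteq\cS^0$ for small $n$'' is vacuous here: the proposition requires the base knot $K$ of the final cable to satisfy $d_+[J_{K,n}]=\delta_K(n)$ for all $n>0$, which means $N_K=0$ and the exceptional set $\cS^0$ is empty, so the discrepancy cannot come from lack of information at small colors. Choosing $T_{r,s}$ with $r/s$ large also does not help, because every torus knot has linear coefficient $b=0$, which keeps the relevant vertices on the non-positive side.

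The correct mechanism, which your fallback (iterated cables) gestures at but does not identify, is displacement of the maximizer \emph{within} $\cS^+\cup\cS^-$ caused by a strictly negative linear coefficient of the intermediate knot, in Case 1 rather than Case 2. The paper first cables the trefoil to get $K=T_{6q-1,q}$, for which $d_+=\delta_K$ for all $n$ (Proposition~\ref{cable_torus}) and $b=\frac{1-q}{2}<0$, and then takes the $(12q^2-1,2)$--cable, whose slope $\frac{12q^2-1}{2}$ is just \emph{below} $4a=6q^2$. The resulting $g^+(k)=2k^2-(2q-3)k$ is concave up with a tiny leading coefficient and vertex at $\frac{2q-3}{4}>0$, so the endpoint $k=\frac{n-1}{2}$ beats the interior competitors $g^+(0)=g^-(-\tfrac12)=0$ only when $n\geq 2q-1$; for $n\leq 2q-2$ the maximum in (\ref{cabling.max}) is $0$ and $d_+$ follows the different quasi-polynomial $\frac{(12q^2-1)\cdot 2(n^2-1)}{4}$. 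Taking $q$ large makes the cut-off $2q-2$ arbitrarily large. Your write-up would need to replace the Case 2 single-cable plan with this two-step construction and carry out the explicit comparison of $g^+$ at the endpoint versus at $k=0,-\tfrac12$; without the intermediate cable producing $b<0$, no choice of a single cabling slope on a torus knot yields the claimed failure.
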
  

\begin{proof}
Example~\ref{example_d_+ not delta} below provides the concrete example of a $(12q^2-1,2)$--cable of a $(6q-1,q)$--cable of the $(3,2)$--torus knot for which the maximal degree of its colored Jones polynomial is a 
quadratic quasi-polynomial only for integers $n\geq 2q-1$ for integers $q>3$.  
(In this example for $0 < n < 2q-2$, the maximal degree of its colored Jones polynomial is another  
quadratic quasi-polynomial.)
However, for the $(6q-1,q)$--cable of the $(3,2)$--torus knot, the maximal degree of its colored Jones polynomial is a 
quadratic quasi-polynomial for all integers $n>0$. 
\end{proof}

Since our construction uses cabling, and noting that $d_+=\delta$ for torus knots, it is natural to wonder if any hyperbolic knot exhibits this behavior.

\begin{question}\label{questionhypd+delta} \phantom{xxx}
\begin{enumerate}
\item
For every hyperbolic knot $K$, does $d_{+}[J_{K, n}(q)] = \delta_K(n)$ for all integers $n > 0$?  
\item
Even when $d_{+}[J_{K, n}(q)] = \delta_K(n)$  only for $n \ge N_K$, 
is $d_{+}[J_{K, n}(q)]$ another quadratic quasi-polynomial for $n < N_K$ as well?  
\end{enumerate}
\end{question}

In preparation for Example~\ref{example_d_+ not delta}, we observe Proposition~\ref{cable_torus} which specializes Proposition~\ref{max-degree} for the case of cables of torus knots. For its proof, we use the notation (such as $g_{i}^+(k)$ and $g_{i}^-(k)$) from  the proof of Proposition~\ref{max-degree}  and follow its argument without further reference. We will also use this notation in the explanation of Example~\ref{example_d_+ not delta}.

\begin{proposition}
\label{cable_torus}
Let $T$ be the $(a,b)$--torus knot with $a,b>1$ and $T_{p,q}$ be a $(p,q)$-cable of $T$ with $q >1$. 
 Then, for any $n>0$, 
$d_+[J_{T_{p,q}, n}(v)]=\delta_{T_{p,q}}(n)$, 
and explicitly 
$\delta_{T_{p,q}}(n)$ is given by 
\begin{equation*}
  \delta_{T_{p,q}}(n)=
   \left\{ \begin{array}{ll} 
           \frac {q^2 ab} 4  n^2 +\frac {(q-1)(p-qab)} 2 n & \\
          \quad\quad + \left(\frac {ab} 4 (q-1)^2-\frac p 2(q-1)-\frac {ab} 4-(1+(-1)^i) \frac {(a-2)(b-2)} 8\right) 
          & \mbox{for } \frac p q< ab, \\
          \frac {pq(n^2-1)} 4 + C_{\sigma}(T_{p,q})  &  \mbox{for }\frac p q  > ab,
          \end{array} \right. 
\end{equation*}
where $i \equiv_{(2)} q(n-1)+1$, $\sigma \equiv_{(2)} n$, 
and  $C_{\sigma}(T_{p,q})$
is a number that only depends on the knot $T$, 
the numbers $p$ and $q$, and the parity $\sigma$ of $n$. 
\end{proposition}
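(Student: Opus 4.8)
The plan is to run the proof of Proposition~\ref{max-degree} with $K=T$, exploiting two features special to torus knots. First, the maximal colored Jones degree of the $(a,b)$--torus knot equals $\delta_T(n)$ for \emph{every} $n>0$: concretely, for all $n>0$,
\[
d_+[J_{T,n}(v)]=\delta_T(n)=\frac{ab}{4}\,n^2-\frac{ab}{4}-\bigl(1+(-1)^n\bigr)\frac{(a-2)(b-2)}{8},
\]
which is well known and also follows directly from Formula~(\ref{cabling.formula}) with $K$ the unknot, whose colored Jones function is explicit and for which the relevant maximum is attained at a single term, so no cancellation occurs. In particular $\delta_T$ has period $\le 2$ with constant coefficients $a_i=ab/4$ and $b_i=0\le 0$ and with $c_i=-ab/4-(1+(-1)^i)(a-2)(b-2)/8$. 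Since $4a_i=ab\in\Z$ while $q>1$ forces $p/q\neq ab$, the hypotheses of Proposition~\ref{max-degree} are satisfied (torus knots obey the Sign Condition by Proposition~\ref{ab-twisted-ab-torus}), and the excluded case $p/q=4a_i$ cannot occur. The content of the present proposition beyond Proposition~\ref{max-degree} is that the displayed formula for $\delta_{T_{p,q}}$ holds for \emph{all} $n>0$, not only for $n\gg 0$; this is what the first feature provides.

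Following the proof of Proposition~\ref{max-degree}: because $d_+[J_{T,|m|}(v)]=\delta_T(|m|)$ already for all $m\neq 0$, we may take $N_T=0$, so $\cS^0=\emptyset$ and $\cS=\cS^+\cup\cS^-$ with $f_\sigma(k)=g_i^\pm(k)$ throughout $\cS$. Moreover, since $b_i=0$, the real polynomials $g_i^+$ and $g_i^-$ coincide; writing $g_i$ for the common quadratic, $g_i(x)=(qab-p)(qx^2+x)+(ab/4+c_i)$, whose vertex is at $x=-1/(2q)\in(-\tfrac12,0)$ because $q>1$. Thus $f_\sigma$ is the restriction of the single quadratic $g_i$ to the symmetric set $\cS=\{-\tfrac{n-1}{2},\dots,\tfrac{n-1}{2}\}$, and its maximum over $\cS$ is governed by the sign of $qab-p$ alone. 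If $p/q<ab$, then $g_i$ is concave up, so it is maximized on $\cS$ at the endpoint $k=\tfrac{n-1}{2}$ farther from the vertex, and Formula~(\ref{cabling.max}) gives $d_+[J_{T_{p,q},n}(v)]=\tfrac{pq(n^2-1)}{4}+g_i(\tfrac{n-1}{2})$. If $p/q>ab$, then $g_i$ is concave down, so it is maximized on $\cS$ at the element of $\cS$ closest to $-1/(2q)$, namely $k=0$ for $n$ odd and $k=-\tfrac12$ for $n$ even, giving $d_+[J_{T_{p,q},n}(v)]=\tfrac{pq(n^2-1)}{4}+C_\sigma(T_{p,q})$ with $C_\sigma(T_{p,q})$ equal to $g_i(0)$ or $g_i(-\tfrac12)$ accordingly, a number depending only on $T$, $p$, $q$ and $\sigma$. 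In both cases the maximizing index is constant on each residue class of $n$ modulo $2$, so the stated formula holds for every $n>0$ (including $n=1$, where $\cS=\{0\}$), and hence $\delta_{T_{p,q}}(n)=d_+[J_{T_{p,q},n}(v)]$ for all $n>0$. As in Proposition~\ref{max-degree}, the Sign Condition for $T$ together with the constancy of the parity of $2qk+1$ over $k\in\cS$ rules out cancellation among top-degree terms, so Formula~(\ref{cabling.max}) is valid.

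Finally I would substitute $a_i=ab/4$, $b_i=0$, and $c_i=-ab/4-(1+(-1)^i)(a-2)(b-2)/8$ into $\tfrac{pq(n^2-1)}{4}+g_i(\tfrac{n-1}{2})$ and simplify, which reproduces the quadratic quasi-polynomial in the statement; its quadratic and linear coefficients are constant and its constant coefficient has period $2$, so $\delta_{T_{p,q}}$ has period $\le 2$. I do not anticipate a real obstacle: the two places that need care are verifying that in the concave-down case the maximizing element of $\cS$ does not migrate as $n$ increases within a fixed parity (this is exactly where $q>1$, placing the vertex in $(-\tfrac12,0)$, is used) and checking consistency at the small values $n=1,2$; the rest is the algebraic simplification.
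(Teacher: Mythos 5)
Your proposal is correct and follows essentially the same route as the paper's proof: it invokes the explicit all-$n$ degree formula for torus knots so that the cabling maximum reduces to maximizing the single quadratic $g_i$ (with $b_i=0$, so $g_i^+=g_i^-$) whose vertex lies at $-\tfrac{1}{2q}\in(-\tfrac12,0)$, yielding the endpoint $k=\tfrac{n-1}{2}$ when $p/q<ab$ and $k=0$ or $k=-\tfrac12$ when $p/q>ab$, exactly as in the paper. The only deviation is your optional aside that the torus-knot degree formula can be rederived from the cabling formula applied to the unknot, which the paper instead cites from Garoufalidis and Kalfagianni--Tran; this does not affect the argument.
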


\begin{proof}
First observe that when  (\ref{cabling.max}) is applied to a $(p,q)$--cable $K_{p,q}$ of a knot $K$ for which $d_+[J_{K, n}(v)] = \delta_K(n)$, we obtain
\begin{equation}
\label{cabling.max2}
d_+[J_{K_{p,q}, n}(v)] = pq(n^2-1)/4 + \max_{k \in {\cS}} \{g_i^{\pm }(k) \}.
\end{equation}

Now assume $T$ is an $(a,b)$--torus knot.
Following \cite[Section 4.8]{Garoufalidis} but with our normalization so that it appears as in \cite[Proof of Theorem 3.9, Case 2]{KT}, 
we have the explicit computation 
\[ d_+[J_{T, n}(v)] = \delta_T(n) = \frac {ab} 4n^2-\frac  {ab} 4-(1+(-1)^{n})\frac{(a-2)(b-2)}{8}\] 
for all integers $n>0$. 
Notably, (\ref{cabling.max2}) applies. 
Then we see that 
\[g_m^+(k) = -pk(qk+ 1) + \delta_T(2qk+ 1) = -pk(qk+ 1) + \delta_T(-2qk- 1) = g_m^-(k),\] 
and thus precisely 
\begin{eqnarray*}
g_m^+(k)=g_m^-(k)&=&q(-p+qab)k^2+(-p+qab)k-(1+(-1)^m) \frac {(a-2)(b-2)}{8}\\
  &=&q(-p+qab)(k+\frac 1 {2q})^2-\frac {-p+qab}{4q}-(1+(-1)^m) \frac {(a-2)(b-2)} 8. 
\end{eqnarray*}
Thus using (\ref{cabling.max2}) we compute that for any $n>0$, 
if $\frac p q<ab$, 
\begin{equation*}
d_+[J_{T_{p,q}, n}(v)] =
\frac{p  q(n^2 - 1)}{4} + g_{i}^+(\frac {n-1}2) 
\end{equation*} 
and if $\frac p q> ab$, 
\begin{equation*}
d_+[J_{T_{p,q}, n}(v)] =
\frac{p  q(n^2 - 1)}{4} + C_{\sigma}(T_{p,q}) 
\end{equation*} 
where $i \equiv_{(2)} q(n-1)+1$, $ C_0(T_{p,q})=g_i^-(-\frac 1 2)$ and $ C_1(T_{p,q})=g_i^+(0)$. 
Therefore we have 
\begin{equation*}
d_+[J_{T_{p,q}, n}(v)] = \delta_{T_{p,q}}(n)
\end{equation*}
for all $n>0$.  
\end{proof}

\begin{example}
\label{example_d_+ not delta}
Let $T$ be  the $(3,2)$--torus knot $(q >3)$, 
$T_{6q-1, q}$ a $(6q-1, q)$--cable of $T$ and 
$T_{6q-1, q;\ 12q^2-1, 2}$ a  $(12q^2-1,2)$--cable of $T_{6q-1, q}$. 
Then for $T$ and $T_{6q-1, q}$, 
we have
$d_+[J_{T, n}(v)]= \delta_T(n)$ and $d_+[J_{T_{6q-1, q}, n}(v)]= \delta_{T_{6q-1, q}}(n)$ 
for all  $n > 0$ (Proposition~\ref{cable_torus}). 
However, 
$d_+[J_{T_{6q-1, q;\ 12q^2-1, 2}, n}(v)]= \delta_{T_{6q-1, q;\ 12q^2-1, 2}}(n)$
only for $n \ge 2q-1 $. 
\end{example} 

\begin{proof}

Noting that $\frac {6q-1} q<3\cdot 2$, from Proposition~\ref{cable_torus}, 
we have 
\begin{equation*}
d_+[J_{T_{6q-1,q}, n}(v)] = \delta_{T_{,6q-1q}}(n)
=\frac {3 q^2} 2n^2+\frac {1-q} 2 n-\frac {3q^2-q +1}{2}
\end{equation*}
for all $n>0$. 

Now put $K = T_{6q-1,q}$ and consider ${K}_{12q^2-1,2}$, 
the $(12q^2-1,2)$--cable of $K = T_{6q-1,q}$.  
Then we have
\begin{eqnarray*}
 && g^+(k) = g_{{K}_{12q^2-1,2}}^+(k) 
 =2k^2-(2q-3)k 
 =2(k-\frac {2q-3} {4})^2 -\frac 1 2 q^2+\frac 3 2 q-\frac 9 8  \\
 && g^-(k) = g_{{K}_{12q^2-1,2}}^-(k) 
 =2k^2+(2q-1)k+q -1
 =2(k+\frac {2q-1} {4})^2-\frac 1 2 q^2+\frac 3 2 q-\frac 9 8.
\end{eqnarray*}
See Figure~\ref{fig:not_all_n}.

\begin{figure}[h]
\includegraphics*[width=0.5\textwidth]{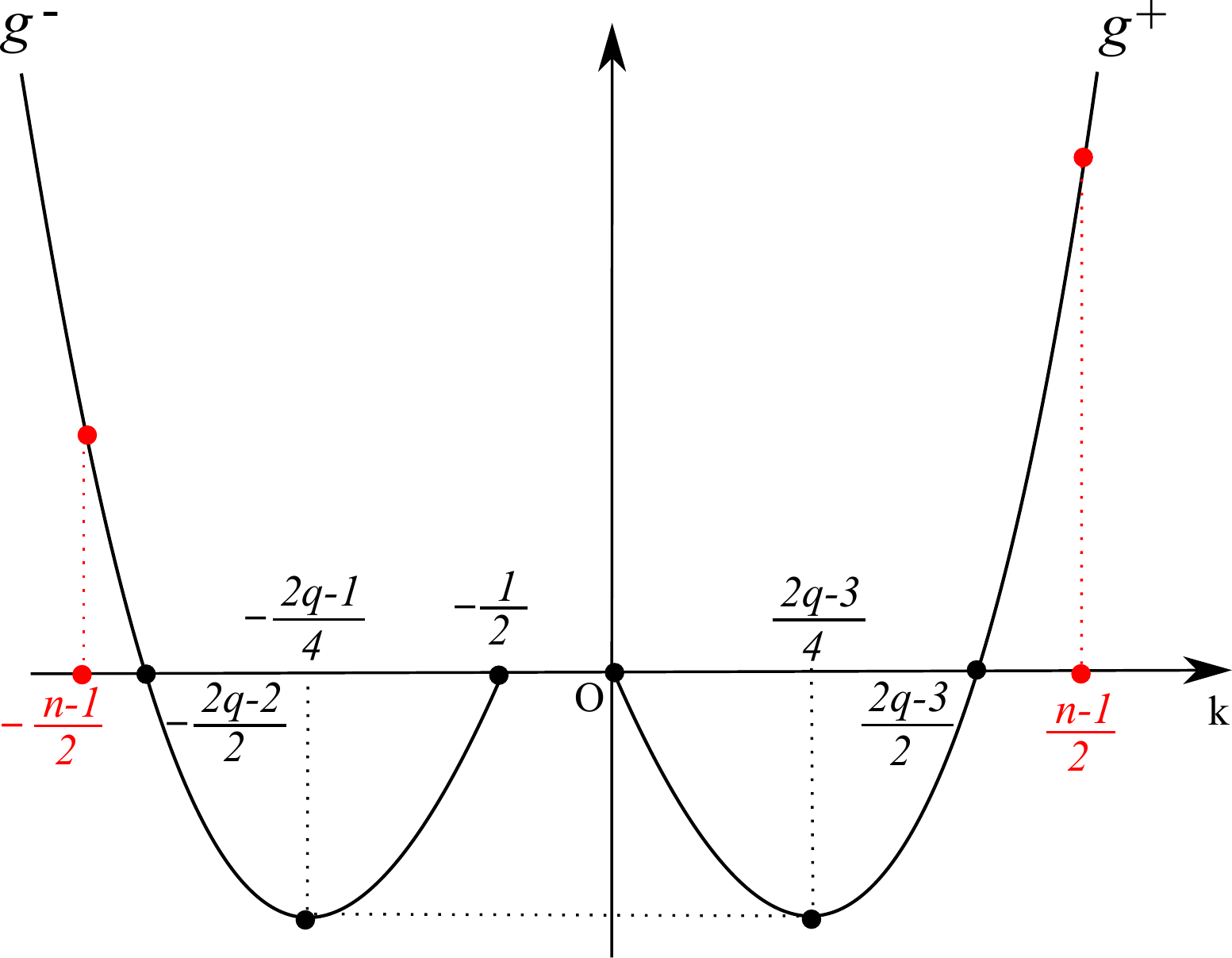}
\caption{graphs of $g^+$ and $g^-$}
\label{fig:not_all_n}
\end{figure}

Then, 
since $d_+[J_{K, n}(v)] = \delta_K(n)$ for all $n>0$, we may use (\ref{cabling.max2}) to compute that
\begin{eqnarray*}
&&d_+[J_{{K}_{12q^2-1,2}, n}(v)] \\
&&=\left\{\begin{array}{ll}
    \frac{(12q^2-1)\cdot 2(n^2 - 1)}{4} + 0
    = \frac{12q^2-1}{2}n^2 -\frac{12q^2-1}{2} & \mathrm{if} \ n\le 2q-2,\\
    \frac{(12q^2-1)\cdot 2(n^2 - 1)}{4} + g_{{K}_{12q^2-1,2}}^+(\frac {n-1} 2) 
        =  6 q^2n^2 + \frac{1-2q}{2}n -6q^2+q-\frac 1 2&  \mathrm{if} \ n\ge 2q-1.                                 
                         \end{array}
                      \right.
\end{eqnarray*}
In particular, 
$d_+[J_{{K}_{12q^2-1,2}, n}(v)] = \delta_{{K}_{12q^2-1,2}}(n)$ only for $n \geq 2q-1$. 
\end{proof}

\section{The Strong Slope Conjecture for graph knots}
\label{graph_knots}

In this section we prove the Strong Slope Conjecture for graph knots (Corollary~\ref{strong_slope_conjecture_graph}) by establishing it for wider class of knots (Theorem~\ref{maximal_set}).
For this we need the the technical conditions of the Sign Condition  (Definition~\ref{sign})  and Condition $\delta$ (Definition~\ref{condition}).

\begin{thm_maximal_set}
Let $\mathcal{K}$ be the maximal set of knots in $S^3$ of which each is either the trivial knot or satisfies Condition $\delta$, the Sign Condition, and the Strong Slope Conjecture.
The set $\mathcal{K}$ is closed under connected sum and cabling.
\end{thm_maximal_set}

\begin{proof}
Theorem~\ref{maximal_set} follows from Lemmas~\ref{condition sum} and \ref{condition cable} below. 
\end{proof} 

\begin{proof}[Proof of Corollary~\ref{strong_slope_conjecture_graph}]
\label{example1}
Let $K$ be a graph knot. 
Then, as noted in the introduction, 
$K$ is obtained from the trivial knot by a finite sequence of operations of cabling 
and connected sum; cf.\ \cite[Corollary~4.2]{Gor}. 
Since the trivial knot is in $\mathcal{K}$ by definition, 
it follows from Theorem~\ref{maximal_set} that the set of nontrivial graph knots is contained in $\mathcal{K}$. 
Thus any graph knot satisfies the Strong Slope Conjecture.

It then follows from Theorem~\ref{maximal_set} that the set of nontrivial graph knots is contained in $\mathcal{K}$. 
Thus any graph knot satisfies the Strong Slope Conjecture. 
\end{proof}

Lemma~\ref{condition sum} and Proposition~\ref{ab-twisted-ab-torus} both make use of a normalization of the colored Jones function.  
For knot $K$ and a nonnegative integer $n$, 
the {\em normalized colored Jones function} of $K$ is the function
\[J'_{K, n}(q):=\frac{J_{K, n+1}(q)}{J_{\bigcirc, n+1}(q)}\] 
so that $J'_{\bigcirc, n}(q)=1$ for the unknot $\bigcirc$ and 
$J'_{K, 1}(q)$ is the ordinary Jones polynomial of a knot $K$. 
In particular, taking $\langle n \rangle$ to be defined by $J_{\bigcirc, n+1}(q) = (-1)^n \langle n \rangle$, 
we obtain the expression
\[\langle n \rangle J'_{K, n}(q) = (-1)^nJ_{K, n+1}(q).\]
Furthermore, since the colored Jones function is multiplicative for connected sums, so is the normalized colored Jones function.  
That is, for knots $K_1$ and $K_2$ we have 
\[J'_{K_1 \sharp K_2, n}(q) = J'_{K_1, n}(q)J'_{K_2, n}(q).\]

\begin{lemma}
\label{condition sum}
If $K_1, K_2 \in \mathcal{K}$, 
then $K_1 \sharp K_2 \in \mathcal{K}$. 
\end{lemma}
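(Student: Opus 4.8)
The plan is to dispatch the trivial summands first and then verify, one at a time, the three properties defining membership in $\mathcal{K}$. First I would observe that if $K_1$ or $K_2$ is the trivial knot, then $K_1 \sharp K_2$ is the other summand and already lies in $\mathcal{K}$; so I may assume both $K_1$ and $K_2$ are non-trivial, in which case each satisfies Condition $\delta$, the Sign Condition, and the Strong Slope Conjecture. Since a connected sum of non-trivial knots is non-trivial, it then remains only to show that $K_1 \sharp K_2$ satisfies these same three properties.

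For the Strong Slope Conjecture and Condition $\delta$, I would write $\delta_{K_i}(n) = a_i n^2 + b_i n + c_i(n)$, where by Condition $\delta$ the coefficients $a_i, b_i$ are constants with $4a_i \in \Z$ and $b_i \le 0$ and $c_i(n)$ has period $\le 2$, so $K_i$ has the single Jones slope $4a_i$. The key point is that since $b_i(n) = b_i$ is constant, an essential surface realizing $SS(n_i)$ for $K_i$ --- which the Strong Slope Conjecture provides for some $n_i$ --- also realizes $SS(n)$ for every $n$; in particular both Jones slopes satisfy $SS(1)$, so Theorem~\ref{strong_slope_conjecture_sum} yields that $K_1 \sharp K_2$ has a single Jones slope satisfying $SS(1)$, which is the Strong Slope Conjecture. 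For Condition $\delta$, I would then invoke the connected-sum formula recalled in the proof of Theorem~\ref{strong_slope_conjecture_sum},
\[
\delta_{K_1 \sharp K_2}(n) = (a_1 + a_2)n^2 + \Bigl(b_1 + b_2 - \frac12\Bigr)n + \Bigl(c_1(n) + c_2(n) + \frac12\Bigr),
\]
and read off that the quadratic and linear coefficients are constant, that $4(a_1+a_2) \in \Z$, that $b_1 + b_2 - \frac12 \le -\frac12 < 0$, and that the period divides the least common multiple of the periods of $c_1$ and $c_2$ and hence is $\le 2$.

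For the Sign Condition I would use multiplicativity of the normalized colored Jones function: combining $\langle n \rangle J'_{K,n}(q) = (-1)^n J_{K,n+1}(q)$ with $J'_{K_1 \sharp K_2, n}(q) = J'_{K_1,n}(q) J'_{K_2,n}(q)$ gives
\[
J_{K_1 \sharp K_2, n+1}(q) = \frac{(-1)^n\, J_{K_1,n+1}(q)\, J_{K_2,n+1}(q)}{\langle n \rangle}.
\]
Since $\langle n \rangle = (-1)^n J_{\bigcirc, n+1}(q)$ has leading coefficient $(-1)^n$, and since over $\Z[q^{\pm 1}]$ maximal degrees add and leading coefficients multiply and divide without cancellation, comparing the top-degree coefficients on both sides gives $\varepsilon_m(K_1 \sharp K_2) = \varepsilon_m(K_1)\varepsilon_m(K_2)$ for all $m \ge 1$. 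Then for $m \equiv n \pmod 2$ the Sign Conditions for $K_1$ and $K_2$ force $\varepsilon_m(K_1 \sharp K_2) = \varepsilon_m(K_1)\varepsilon_m(K_2) = \varepsilon_n(K_1)\varepsilon_n(K_2) = \varepsilon_n(K_1 \sharp K_2)$, so $K_1 \sharp K_2$ satisfies the Sign Condition, and hence $K_1 \sharp K_2 \in \mathcal{K}$.

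The hard part --- really the only subtle point --- will be reconciling the ``common $n_0$'' hypothesis of Theorem~\ref{strong_slope_conjecture_sum} with the fact that the Strong Slope Conjecture a priori supplies only some possibly different $n_i$ for each summand; this is exactly where Condition $\delta$ enters, since constancy of $b_i(n)$ promotes $SS(n_i)$ to $SS(n)$ for all $n$. Everything else is routine bookkeeping with the connected-sum degree formula and the product formula for the normalized colored Jones function.
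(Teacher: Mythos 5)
Your proposal is correct and follows essentially the same route as the paper: reduce to non-trivial summands, get the Strong Slope Conjecture from Theorem~\ref{strong_slope_conjecture_sum}, and use the product formula for the normalized colored Jones function to obtain the degree formula and the sign multiplicativity $\varepsilon_n(K_1\sharp K_2)=\varepsilon_n(K_1)\varepsilon_n(K_2)$, from which Condition $\delta$ and the Sign Condition follow. Your explicit observation that constancy of $a_i$ and $b_i$ promotes $SS(n_i)$ to $SS(n)$ for all $n$, so that a common $n_0$ for Theorem~\ref{strong_slope_conjecture_sum} exists, is a point the paper leaves implicit, and it is handled correctly.
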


\begin{proof}
[Proof of Lemma~\ref{condition sum}] 
Since the trivial knot is the identity for the connected sum operation, we may assume neither $K_1$ nor $K_2$ is trivial. 
By Theorem~\ref{strong_slope_conjecture_sum}, 
$K_1 \sharp K_2$ satisfies the Strong Slope Conjecture. 
So it remains to show that $K_1 \sharp K_2$ satisfies Condition $\delta$ and the  Sign Condition. 

Recall first that 
\[\langle n \rangle J'_{K, n}(q) = (-1)^nJ_{K, n+1}(q)\]
and that the normalized colored Jones function satisfies 
\[J'_{K_1 \sharp K_2, n}(q) = J'_{K_1, n}(q)J'_{K_2, n}(q).\]
Then we have
\begin{align*}
\langle n \rangle J'_{K_1 \sharp K_2, n}(q) 
& = \langle n \rangle J'_{K_1, n}(q)J'_{K_2, n}(q)\\ 
& = \langle n \rangle \frac{(-1)^n}{\langle n \rangle} J_{K_1, n+1}(q) \frac{(-1)^n}{\langle n \rangle} J_{K_2, n+1}(q)\\
& = \frac{1}{\langle n \rangle} J_{K_1, n+1}(q) J_{K_2, n+1}(q).
\end{align*} 
Since $\langle n \rangle J'_{K_1 \sharp K_2, n}(q) = (-1)^n J_{K_1 \sharp K_2, n+1}(q)$, 
we have  
\[\langle n-1 \rangle J_{K_1 \sharp K_2, n}(q) = (-1)^{n-1}J_{K_1, n}(q) J_{K_2, n}(q).\]
This implies 
\[\delta_{K_1 \sharp K_2}(n) = (a_1+a_2)n^2+(b_1+b_2-\frac 1 2)n+(c_1(n)+c_2(n)+\frac 1 2),\]
and 
\[\varepsilon_n(K_1 \sharp K_2) = \varepsilon_n(K_1)\varepsilon_n(K_2)
.\]

Let us see that $K_1 \sharp K_2$ satisfies Condition $\delta$. 
Since the  period of $c_i(n)$ is at most $2$, 
$c_1(n)+c_2(n)+\frac 1 2$ has period at most $2$, 
and hence $\delta_{K_1 \sharp K_2}(n)$ has also period $\le 2$. 
Since $b_1\le 0$ and $b_2 \le 0$, 
$(b_1 + b_2) - \frac 1 2 \le 0$. 
Since $4a_1$ and $4a_2$ are integers, so is $4(a_1+a_2)$. 

Finally we check the Sign Condition for $K_1 \sharp K_2$. 
Since $K_1$ and $K_2$ belong to $\mathcal{K}$, 
$\varepsilon_m(K_i) = \varepsilon_n(K_i)$ 
 for $m \equiv n\ \mathrm{mod}\ 2$, 
and hence 
$\varepsilon_m(K_1 \sharp K_2) = \varepsilon_n(K_1 \sharp K_2)$ 
 for $m \equiv n\ \mathrm{mod}\ 2$. 
\end{proof}

\begin{lemma}
\label{condition cable}
If $K \in \mathcal{K}$, 
then $K_{p, q} \in \mathcal{K}$. 
\end{lemma}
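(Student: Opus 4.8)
The plan is to reduce to the already-established results about cables. First, if $K$ is the trivial knot, then $K_{p,q}$ is the $(p,q)$--torus knot, which belongs to $\mathcal{K}$ because torus knots satisfy Condition $\delta$ (their $\delta$ is an honest quadratic polynomial with nonpositive linear coefficient and $4a=ab\in\Z$), the Sign Condition (Proposition~\ref{ab-twisted-ab-torus}, to be proved in this section), and the Strong Slope Conjecture (Garoufalidis). So we may assume $K$ is nontrivial, hence $q>1$ and the cable $K_{p,q}$ is a genuine cable. In that case $K$ satisfies Condition $\delta$, the Sign Condition, and the Strong Slope Conjecture by hypothesis, and Proposition~\ref{cable-max} applies verbatim: it tells us that $K_{p,q}$ again satisfies Condition $\delta$, the Sign Condition, and the Strong Slope Conjecture. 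Therefore $K_{p,q}\in\mathcal{K}$.

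In slightly more detail, the trivial-knot case needs one small observation: the trivial knot is \emph{excluded} from the defining conditions of $\mathcal{K}$ (it has $b=1/2>0$), so we cannot feed it into Proposition~\ref{cable-max}; instead we invoke directly that the resulting torus knot $T_{p,q}$ is in $\mathcal{K}$. Here $\delta_{T_{p,q}}(n)=\tfrac{ab}{4}n^2+bn+c(n)$ with $a=p$, $b=q$ in the torus-knot formula $\delta_{(a,b)}(n)=\tfrac{ab}{4}n^2-\tfrac{ab}{4}-(1+(-1)^n)\tfrac{(a-2)(b-2)}{8}$, which has constant (indeed zero) linear coefficient, period $\le 2$, and $4a = ab\in\Z$, so Condition $\delta$ holds; the Sign Condition holds by Proposition~\ref{ab-twisted-ab-torus}; and the Strong Slope Conjecture for torus knots is classical \cite{Garoufalidis}. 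For the nontrivial case, we only need to check that the hypotheses of Proposition~\ref{cable-max} are literally the membership conditions for $\mathcal{K}$ for a nontrivial knot, which they are.

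The main obstacle is not in this lemma itself — it is a short bookkeeping argument once Proposition~\ref{cable-max} is in hand — but rather the dependency: Proposition~\ref{cable-max} rests on Proposition~\ref{max-degree}, whose proof requires the Sign Condition precisely to rule out the infinite cascade of leading-term cancellations in the cabling formula~(\ref{cabling.formula}) illustrated in Figure~\ref{fig:cancellation_fig}. That is why $\mathcal{K}$ is defined to carry the Sign Condition along as part of the data: it is exactly the extra invariant needed to make the cabling step iterable. So the real content is already packaged in Proposition~\ref{cable-max}, and here we merely note that the class $\mathcal{K}$ was built so that this packaging closes up under cabling, with the trivial knot handled separately via torus knots.
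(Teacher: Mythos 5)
Your proof is correct and follows essentially the same route as the paper: handle the trivial knot separately by noting its cables are torus knots, which lie in $\mathcal{K}$ (Condition $\delta$, the Sign Condition via Proposition~\ref{ab-twisted-ab-torus}, and the Strong Slope Conjecture), and then apply Proposition~\ref{cable-max} to nontrivial $K\in\mathcal{K}$. The only cosmetic difference is your choice of references for the torus-knot facts, which does not change the argument.
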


\begin{proof}[Proof of Lemma~\ref{condition cable}] 
If $K$ is trivial, then its cables $K_{p,q}$ are torus knots.
It is observed in \cite[p.924]{KT} that any nontrivial torus knot satisfies  Condition $\delta$. 
Furthermore, Case 2 in the proof of \cite[Theorem 3.9]{KT} shows that 
any nontrivial torus knot satisfies the Strong Slope Conjecture. 
As we observe in Proposition~\ref{ab-twisted-ab-torus} below, 
torus knots also satisfy the Sign Condition. 
Hence torus knots belong to $\mathcal{K}$.  
Thus we may assume $K$ is non-trivial. 

Then for non-trivial $K \in \mathcal{K}$, that $K_{p, q} \in \mathcal{K}$ follows from Proposition~\ref{cable-max}.
\end{proof} 

Here we show that a torus knot satisfies the Sign Condition 
by calculating the coefficient of the term of the maximum degree of its colored Jones polynomial. 

\begin{proposition}
\label{ab-twisted-ab-torus} 
Let $a$ and $b$ be coprime integers with $a>b>1$. 
Then the coefficient of the term of the maximum degree of the colored Jones polynomial ${J}_{K,n}(q)$ 
of $K=T_{a,b}$ is $1$ if $n$ is odd and $-1$ if $n$ is even. 
The coefficient of the term of the maximum degree of the colored Jones polynomial ${J}_{K,n}(q)$ 
of $K=T_{-a,b}$ is 1.
\end{proposition}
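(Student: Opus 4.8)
The plan is to compute explicitly the maximum-degree coefficient of $J_{T_{a,b},n}(q)$ from a known closed formula for the colored Jones polynomial of torus knots, and then observe the sign. The cleanest starting point is the Rosso--Jones formula: in our normalization one has, for the $(a,b)$--torus knot and color $n$,
\[
J_{T_{a,b},n}(q) \;=\; \frac{q^{ab(n^2-1)/4}}{q^{n/2}-q^{-n/2}} \sum_{k} \left( q^{-abk^2+k}\,q^{\,bk/2} - q^{-abk^2+k}\,q^{-bk/2}\right),
\]
where $k$ runs over the arithmetic progression $\frac{1-n}{2},\frac{3-n}{2},\dots,\frac{n-1}{2}$ (and one must be careful with the sign/normalization conventions so that this matches the $J_{K,n}$ used elsewhere in the paper, e.g.\ via $\langle n-1\rangle J_{K,n}(q)=(-1)^{n-1}(\cdots)$). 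First I would record this formula, together with the degree-maximizing term: among the summands, the degree of each monomial is a concave-down quadratic in $k$ shifted by the $\pm bk/2$ perturbation, so the maximum over the finite index set is attained at the extreme value $k=\frac{n-1}{2}$ (using $a>b>1$, as in the computations behind Proposition~\ref{cable_torus}).

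Next I would extract the leading term. Expanding $1/(q^{n/2}-q^{-n/2}) = q^{-n/2}(1 + q^{-n} + q^{-2n} + \cdots)$ (a series in descending powers) and multiplying by the polynomial part, the top-degree contribution comes from pairing the highest-degree monomial of the numerator sum with the constant term $q^{-n/2}$ of the geometric series; lower terms of the series only decrease the degree. One then checks that the highest monomial in the numerator sum is not cancelled by any other monomial — this is where the concavity/extremality of $k=\frac{n-1}{2}$ is used — and reads off its coefficient, which is $\pm 1$. Tracking the overall sign through the normalization factor $(-1)^{n-1}$ (or $(-1)^n$, depending on which indexing convention the paper is using for $\langle n\rangle$) yields coefficient $+1$ for $n$ odd and $-1$ for $n$ even in the case $T_{a,b}$ with $a,b>1$.

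For the mirror $T_{-a,b}$, I would use that mirroring sends $J_{K,n}(q)$ to $J_{K,n}(q^{-1})$, which turns the maximum-degree term into the minimum-degree term; so the claim reduces to computing the sign of the \emph{lowest}-degree coefficient of $J_{T_{a,b},n}(q)$. Repeating the extreme-value analysis on the other side — the minimizing index is $k=-\frac{n-1}{2}$, and the geometric-series expansion is now taken in ascending powers $q^{n/2}(1+q^n+\cdots)$ — one finds the lowest coefficient is $+1$ for all $n$, the extra factor of $(-1)$ relative to the top end being absorbed by the difference between the two endpoint contributions and the sign of the denominator expansion. Hence $T_{-a,b}$ has maximum-degree coefficient $1$ for every $n$.

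The main obstacle I anticipate is bookkeeping rather than conceptual: keeping the Rosso--Jones/Habiro normalization consistent with the $J_{K,n}$ convention used throughout this paper (including the shift $n\leftrightarrow n+1$ and the sign $(-1)^n$ coming from $J_{\bigcirc,n+1}$), and verifying rigorously that the extremal monomial survives without cancellation — i.e.\ that no two distinct values of $k$, possibly combined with different terms of the expanded denominator, produce the same top degree. Once the non-cancellation at the extreme index is established (which follows from strict concavity of the exponent in $k$ together with $b\ge 2$), the coefficient extraction is a short computation.
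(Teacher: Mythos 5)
Your overall plan (start from an explicit Rosso--Jones/Morton formula, locate the extremal monomial, check non-cancellation, and untangle the normalization) is the same general strategy as the paper, which uses Morton's formula for $J'_{T_{a,b},n}(q)$. However, there is a genuine error at the crucial maximization step. In Morton's formula the exponents of the summands are
$-abk^2+(a-b)k+\tfrac12$ and $-abk^2+(a+b)k-\tfrac12$, which are \emph{concave down} in $k$ with vertex $\tfrac{a\mp b}{2ab}\in(0,\tfrac12)$. Hence the maximum over the index set is attained at the lattice point of the index set nearest the vertex, namely at $k=0$ when the index runs over integers and at $k=\tfrac12$ when it runs over half-integers; the extreme index $k=\pm\tfrac{n-1}{2}$ is where the exponent is (essentially) \emph{minimized}, not maximized. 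Your claim that ``the maximum over the finite index set is attained at the extreme value $k=\tfrac{n-1}{2}$'' (and the appeal to the computations behind Proposition~\ref{cable_torus}) conflates two different situations: in the cabling computation the quadratic has leading coefficient $q(-p+qab)>0$ and is concave up, so endpoint maximization is correct there, but for the torus knot itself the leading coefficient is $-ab<0$. A quick sanity check: the top degree of $J_{T_{a,b},n}$ grows like $\tfrac{ab}{4}n^2$, which is what the near-vertex term gives, whereas the endpoint terms only contribute degree linear in $n$ (this linear growth is exactly what governs the \emph{minimum} degree, i.e.\ the mirror $T_{-a,b}$, where your endpoint analysis is fine).

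This is not just a bookkeeping slip, because the parity-dependence of the sign comes precisely from which family of summands dominates at the near-vertex index: for integer $k$ the maximum is $+q^{\,f_+(0)}$ from the first (positive) family, while for half-integer $k$ one has $g_+(\tfrac12)<g_-(\tfrac12)$, so the maximum is $-q^{\,g_-(1/2)}$ from the second (negative) family. Your plan instead attributes the alternation to the normalization factor $(-1)^{n-1}$, but in this paper's conventions $J_{K,n}(q)=(-1)^{n-1}\langle n-1\rangle J'_{K,n-1}(q)=(q^{\frac{n-1}{2}}+\cdots+q^{-\frac{n-1}{2}})\,J'_{K,n-1}(q)$, whose prefactor has positive leading coefficient, so no sign survives from the normalization; moreover, an endpoint-dominant analysis would have the same family dominating for all $n\ge 2$ (since the two exponents differ by $2bk-1>0$ at $k=\tfrac{n-1}{2}$), predicting a sign independent of parity, contradicting the statement. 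To repair the proof you need to maximize at the index nearest the vertex, split into the integer/half-integer cases, and read off the sign of the dominant family there; the leading coefficient $\pm1$ is then extracted, as you suggest, by multiplying back through the denominator $q^{\frac{n+1}{2}}-q^{-\frac{n+1}{2}}$.
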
 

\begin{proof}
In the following we observe that the coefficient of the term of the maximum degree of the normalized colored Jones polynomial ${J'}_{K,n}(q)$ 
of $K=T_{a,b}$ is $1$ if $n$ is even and $-1$ if $n$ is odd, and 
the coefficient of the term of the maximum degree of the normalized colored Jones polynomial ${J'}_{K,n}(q)$ 
of $K=T_{-a,b}$ is $1$. 
Then the result follows from the formula
\[J_{K, n}(q) = (-1)^{n-1}\langle n-1 \rangle J'_{K, n-1}(q)=(q^{\frac {n-1} 2}+ \cdots +q^{-\frac {n-1} 2})J'_{K, n-1}(q).\]

First we note that for any knot $K$ with mirror $K^*$, 
the maximum degree of ${J'}_{K^*,n}(q)$ is the minimum degree of ${J'}_{K,n}(q^{-1})$.   
So instead of determining the coefficient of the term of the maximum degree of the colored Jones polynomial ${J'}_{K,n}(q)$ of $K=T_{-a,b}$, 
we instead determine  the coefficient of the term of the minimum degree of the colored Jones polynomial ${J'}_{K,n}(q)$ of $K=T_{a,b}$.

The normalized colored Jones polynomial of $K = T_{a, b}$ is explicitly computed in \cite{Mor}: 
\begin{equation}
\label{J'}
J'_{K,n}(q) = \frac {q^{\frac 1 4 abn(n+2)}}{q^{\frac {n+1}2}-q^{-\frac {n+1}2}}
  \sum_{k=-\frac n 2}^{\frac n 2} 
    (q^{-abk^2+(a-b)k+\frac 1 2}-q^{-abk^2+(a+b)k-\frac 1 2}). 
\end{equation}

First we consider the case where $n$ is even.
Then $k$ is an integer in the summand. 
We define the functions $f_\pm (\ell)$ on $\Z$ by 
\[
 f_{\pm}(\ell):=-ab \ell^2+(a\mp b)\ell \pm \frac 1 2. 
\]
Since 
\[
f_{\pm}(\ell)=-ab(\ell- \frac {a\mp b}{2ab})^2+\frac {(a\mp b)^2}{4ab}\pm \frac 1 2
\]
and $0<\frac {a\mp b}{2ab}<\frac 1 2$, $f_\pm (\ell)$ is maximized at $\ell=0$ and 
$f_- (0)<f_+ (0)=\frac 1 2$. 
Hence the maximum degree of $J'_{K,n}(q)$ for even $n$ is calculated  by 
\[
\frac 1 4 abn(n+2)-\frac {n+1} 2+\frac 1 2= \frac {ab}{4} n^2+\frac {ab-1}{2} n. 
\]

Since $J'_{K, n}(q)$ is a Laurent polynomial, 
we may write 
\[J'_{K, n}(q) = A q^{\frac {ab}{4} n^2+\frac {ab-1}{2} n} + [\textrm{lower degree terms}]\] 
for some integer $A$.  
Then following $(\ref{J'})$, 
we have 
\[
(q^{\frac {n+1}2}-q^{-\frac {n+1}2}) (A q^{\frac {ab}{4} n^2+\frac {ab-1}{2} n} + [\textrm{lower degree terms}]) 
= (q^{\frac 1 4 abn(n+2)}) q^{\frac{1}{2}} + [\textrm{lower degree terms}].
\]
This shows $A=1$ and the term of the maximum degree is $q^{\frac {ab}{4} n^2+\frac {ab-1}{2} n}$.

Moreover, $f_\pm (\ell)$ is minimized at $\ell=-\frac n 2$ and 
$f_+ (-\frac n 2)>f_- (-\frac n 2)=-\frac {ab} 4 n^2-\frac {a+b} 2n -\frac 1 2$. 
Hence the minimum degree of $J'_{K,n}(q)$ for even $n$ is calculated  by 
\[
\frac 1 4 abn(n+2)+\frac {n+1} 2-\frac {ab} 4 n^2-\frac {a+b} 2 n -\frac 1 2
= \frac {ab-a-b+1} 2 n=\frac {(a-1)(b-1)} 2 n, 
\]
and the term of the minimum degree is $q^{\frac {(a-1)(b-1)} 2 n}$.

Now we consider the case where $n$ is odd.
Then $k$ is a half-integer in the summand. 
We define the functions $f_\pm (\ell)$ on $\Z + \frac 1 2$ by 
\[
 g_{\pm}(\ell):=-ab \ell^2+(a\mp b)\ell \pm \frac 1 2. 
\]
Since 
\[
g_{\pm}(\ell)=-ab(\ell- \frac {a\mp b}{2ab})^2+\frac {(a\mp b)^2}{4ab}\pm \frac 1 2
\]
and $0<\frac {a\mp b}{2ab}<\frac 1 2$, $g_\pm (\ell)$ is maximized at $\ell=\frac 1 2$ and 
$g_+ (\frac 1 2)<g_- (\frac 1 2)=-\frac {ab} 4+\frac {a+ b} 2 - \frac 1 2$. 
Hence the maximum degree of $J'_{K,n}(q)$ for even $n$ is calculated  by 
\[
\frac 1 4 abn(n+2)-\frac {n+1} 2-\frac {ab} 4+\frac {a+ b} 2 - \frac 1 2
= \frac {ab} 4 n^2+\frac {ab-1} 2 n-\frac {(a-2)(b-2)} 4\, 
\]
and the term of the maximum degree is $-q^{\frac {ab} 4 n^2+\frac {ab-1} 2 n-\frac {(a-2)(b-2)} 4}$. 

Moreover, $g_\pm (\ell)$ is minimized at $\ell=-\frac n 2$ and 
$g_+ (-\frac n 2) > g_- (-\frac n 2)=-\frac {ab} 4 n^2-\frac {a+b} 2n -\frac 1 2$. 
Hence the minimum degree of $J'_{K,n}(q)$ for odd $n$ is calculated  by 
\[
\frac 1 4 abn(n+2)+\frac {n+1} 2-\frac {ab} 4 n^2-\frac {a+b} 2 n -\frac 1 2
= \frac {ab-a-b+1} 2 n=\frac {(a-1)(b-1)} 2 n, 
\]
and the term of the minimum degree is $q^{\frac {(a-1)(b-1)} 2 n}$. 
\end{proof}

\subsection{$B$--adequate knots satisfy the Sign Condition}

Let us turn to see that $B$--adequate knots, 
and hence adequate knots, also belong to $\mathcal{K}$. 
It is known by \cite[Theorem~3.9]{KT} that $B$--adequate knots satisfy the Strong Slope Conjecture. 
For a $B$--adequate diagram $D$ with $c_+$ positive crossings of a $B$--adequate knot $K$, 
\cite[Lemma~3.6]{KT} shows that $\delta_K(n)=\frac {c_+}{2}n^2+bn^2+c$, so $4a \in \Z$.
Moreover, from \cite[Lemma~3.8]{KT},  we have that $b \le 0$.
Thus $B$--adequate knots satisfy Condition $\delta$. 
It remains to observe that $B$--adequate knots also 
satisfy the Sign Condition. 

We begin by reviewing a presentation of $J_{K,n}(q)$ in terms of Chebychev polynomials $S_n(x)$ for $n \ge 0$ \cite{Lic}. 
The polynomial $S_n(x)$ is defined recursively as follows:
\begin{equation}
\label{cheb} 
S_{n+1}(x)=x S_n(x)-S_{n-1}(x), \qquad S_1(x)=x, \qquad S_0(x)=1
\end{equation}
Let $D$ be a diagram of a knot $K$. 
For an integer $m >0$, 
let $D^m$ denote the diagram obtained from $D$ by taking $m$ parallel copies of $K$. 
This is the $m$--cable of $D$ using the blackboard framing; if $m = 1$, then $D^1=D$. 
The Kauffman bracket is the function 
$\langle \cdot \rangle \colon
\{ \mathrm{unoriented \ link \ diagrams }\} \to \Z [t^{\pm 1}]
$
satisfying 
\begin{eqnarray*}
 && (1)\, \langle \begin{minipage}[1cm]{1cm}
      \begin{center}      
    \includegraphics[scale=0.2]{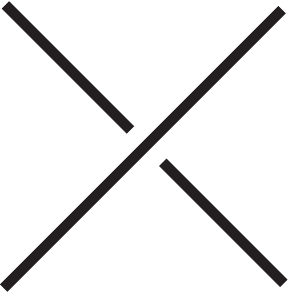}
     \end{center}    
     \end{minipage}
       \rangle
    =t \,  \langle
    \begin{minipage}[1cm]{1cm}
    \begin{center}
    \includegraphics[scale=0.2]{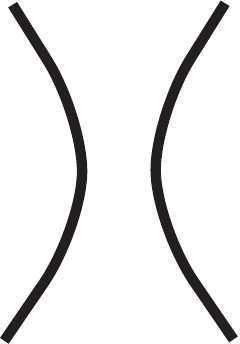}
    \end{center}
    \end{minipage}
     \rangle \, 
    +t^{-1} \,  \langle
    \begin{minipage}[1cm]{1cm}
    \begin{center}
    \includegraphics[scale=0.2]{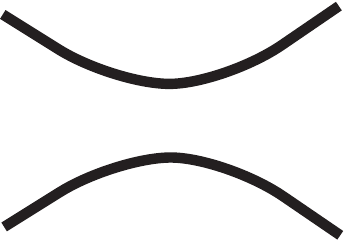}
    \end{center}
    \end{minipage}
     \rangle\\
 && (2) \, \langle D  \sqcup \bigcirc\rangle \ = (-t^2-t^{-2}) \ \langle D \rangle. 
\end{eqnarray*}
It is normalized so that the bracket of the empty link is $1$. 
Let $w(D)$ be the writhe of $D$. 
Then  the colored Jones polynomial of $K$ is given by 
\begin{equation}
\label{jones-skein}
J_{K, n}(q)=(-1)^{n-1}((-1)^{n-1}q^{(n^2-1)/4})^{w(D)}\langle S_{n-1}(D)\rangle|_{t = q^{-1/4}},
\end{equation} 
where $S_{n}(D)$ is a linear combination of blackboard cablings of $D$, obtained via the equation 
(\ref{cheb}), and the notation $\langle S_{n}(D)\rangle$ means to extend the Kauffman bracket  linearly.

\begin{proposition}\label{prop-BadequateSignCond}
Let $K$ be a $B$--adequate knot. 
Then $\varepsilon_m(K)=\varepsilon_n(K)$ if $m \equiv n \pmod 2$, 
namely a $B$--adequate knot satisfies the Sign Condition. 
\end{proposition}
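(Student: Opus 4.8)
The plan is to read off the sign $\varepsilon_n(K)$ of the leading coefficient of $J_{K,n}(q)$ directly from the Chebyshev--Kauffman expression (\ref{jones-skein}), and then to observe that the resulting formula depends only on the parity of $n$.

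Fix a $B$--adequate diagram $D$ of $K$, and write $c$ for its number of crossings, $w=w(D)$ for its writhe, and $v_B=|s_B(D)|$ for the number of loops in its all--$B$ state. The first step is the standard consequence of $B$--adequacy, read off from the Kauffman bracket relations above: the minimal $t$--degree term of $\langle D\rangle$ is carried by the all--$B$ state alone --- $B$--adequacy is precisely the condition preventing cancellation at the bottom degree --- and it equals $(-1)^{v_B}t^{-c-2v_B}$, since each of the $v_B$ disjoint loops contributes its lowest term $-t^{-2}$ to $(-t^2-t^{-2})^{v_B}$. The second step is to pass to cables: $D^m$ is again $B$--adequate, and its all--$B$ state $s_B(D^m)$ is the $m$--cable of $s_B(D)$. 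Indeed, resolving by $B$ the $m\times m$ grid of crossings lying over a single crossing of $D$ produces the $m$--cable of the single $B$--smoothing of that crossing, with no extra closed loops --- a purely local check, verified directly by tracing the resolved arcs (e.g.\ in the case $m=2$) --- while away from the crossings $D^m$ is just $m$ parallel copies of $D$. Since the two resolution arcs at each crossing of $D^m$ then lie on copies of the two distinct loops of $s_B(D)$ met by that crossing, $D^m$ inherits $B$--adequacy, $|s_B(D^m)|=m\,v_B$, and applying the first step to $D^m$ (which has $m^2 c$ crossings) shows that the minimal $t$--degree term of $\langle D^m\rangle$ is $(-1)^{m v_B}\,t^{-m^2 c-2m v_B}$.

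Now expand $\langle S_{n-1}(D)\rangle$ via (\ref{cheb}) as $\langle D^{n-1}\rangle$ plus a linear combination of the $\langle D^{n-1-2j}\rangle$, $j\ge 1$. Because $m\mapsto -m^2 c-2m v_B$ strictly decreases through the positive integers, the term $\langle D^{n-1}\rangle$ strictly dominates all the others at the low end of the $t$--degree, so the minimal $t$--degree coefficient of $\langle S_{n-1}(D)\rangle$ is that of $\langle D^{n-1}\rangle$, namely $(-1)^{(n-1)v_B}$; the cases $n=1,2$, where $S_{n-1}(D)$ equals $1$ or $D$, fit the same formula, and the unknot satisfies the Sign Condition trivially, so we may take $c\ge 1$. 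Since $t=q^{-1/4}$ carries minimal $t$--degree to maximal $q$--degree, (\ref{jones-skein}) gives
\[
\varepsilon_n(K)=(-1)^{n-1}\big((-1)^{n-1}\big)^{w}(-1)^{(n-1)v_B}=(-1)^{(n-1)(1+w+v_B)} ,
\]
which depends only on the parity of $n-1$, hence only on that of $n$. This is exactly $\varepsilon_m(K)=\varepsilon_n(K)$ whenever $m\equiv n\pmod 2$, as required.

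The main obstacle is the cabling step: verifying that the all--$B$ resolution of the $m$--cable of a crossing has no extra loops, so that $|s_B(D^m)|=m\,v_B$, and that $D^m$ is $B$--adequate. Both are standard in adequacy theory (cf.\ \cite{Lic,FKP}) but deserve to be stated carefully; everything afterwards is routine bookkeeping. As consistency checks, the same computation yields $d_+[J_{K,n}(q)]=\tfrac{c_+}{2}n^2+\tfrac{v_B-c}{2}\,n+\mathrm{(const.)}$, recovering \cite[Lemmas~3.6 and 3.8]{KT} via $w+c=2c_+$ and $v_B\le c$, and for the standard right--handed trefoil diagram it gives $\varepsilon_n(K)=(-1)^{n-1}$, agreeing with Proposition~\ref{ab-twisted-ab-torus}.
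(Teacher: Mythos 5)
Your proposal is correct and follows essentially the same route as the paper's proof: it isolates the extreme-degree term of the Kauffman bracket at the all-$B$ state, uses $c(D^m)=m^2c$, $v_{s_-}(D^m)=m\,v_{s_-}(D)$ and the monic Chebyshev expansion $S_{n-1}(D)=D^{n-1}+\text{lower cablings}$ to see that $\langle D^{n-1}\rangle$ carries the extreme term of $\langle S_{n-1}(D)\rangle$, and then reads off the sign $(-1)^{(n-1)(1+w(D)+v_{s_-}(D))}$ from the framing/normalization factor in the skein formula, exactly as in the paper. The only difference is that you re-derive the standard adequacy facts that the paper cites from Le's lecture notes, which is a matter of exposition rather than a different argument.
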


\begin{proof} 
Let $X$ be the set of crossings of a diagram $D$. 
Let $c(D)$ be the number of crossings of $D$. 
A state for $D$  is a function $s \colon X \to \{\pm 1\}$. 
For each $\pm = +$ or $-$, 
 we denote by $s_{\pm}$ the special state $s$ with $s(x)=\pm 1$ for every crossing $x$. 
For a state $s$, let $sD$ be the diagram constructed from $D$ by doing 
$s(x)$--smoothing (see e.g.\ \cite{Lic}) at every crossing $x$. 
Then $sD$ consists of disjoint simple closed curves on $S^2$. 
Let $v_s(D)$ be the number of connected components of $sD$ and 
$\sigma(s):=\sum_{x \in X} s(x)$. 
We have that $\sigma(s_+)=c(D)$ and $\sigma(s_-) = -c(D)$. 

The Kauffman bracket $\langle D \rangle$ for a diagram $D$ is given by
\begin{equation}\label{Kauff}
\langle D \rangle = \sum_{s : \mathrm{state}} \langle sD \rangle 
= \sum_{s : \mathrm{state}} t^{\sigma(s)}(-t^{2}-t^{-2})^{v_s(D)}.
\end{equation} 

Recall the following facts; see \cite[Lemma~2.17, the proof of Proposition~4.6]{Le-lec}. 
The fact (1) follows from the equation (\ref{cheb}). 
\begin{enumerate}
\item $S_n(D)=D^n+\mathrm{lower \ degree \ cablings \ of}\ D$.  
\item If $D$ is a $B$--adequate diagram, 
then $D^n$ is also a $B$--adequate diagram. 
\item If $D$ is a $B$--adequate diagram, 
then $c(D^n)=n^2c(D)$ and $v_{s_{-}}(D^n)=nv_{s_{-}}(D)$. 
\item If $D$ is a $B$--adequate diagram, 
then $\mathrm{deg}_{-}[\langle S_{n}(D) \rangle] 
= \mathrm{deg}_{-}[\langle D^n \rangle]$, 
where $\mathrm{deg}_{-}$ means  minimum $t$--degree. 
\end{enumerate} 

Moreover, for a $B$--adequate diagram $D$, 
the minimum $t$--degree of $\langle D \rangle$ coincides with that of $\langle s_{-}D \rangle$; 
see the paragraph before \cite[Theorem~2.16]{Le-lec}. 
This then implies that the maximum $q$--degree of $\langle D \rangle|_{t = q^{-1/4}}$ coincides with that of $\langle s_{-}D \rangle|_{t = q^{-1/4}}$.  
Hence we have the formula for the maximum $q$--degree: 
\[d_+[\langle D \rangle|_{t = q^{-1/4}}] = d_+[\langle s_{-}D \rangle|_{t = q^{-1/4}}].\] 
Combining this with formula (\ref{Kauff}), 
we have the following for $\langle D \rangle|_{t = q^{-1/4}}$: 

$\bullet$ 
The term of maximum $q$--degree of $\langle D \rangle|_{t = q^{-1/4}}$
is $q^{-\frac{\sigma(s_{-})}{4}}(-q^{\frac{1}{2}})^{v_{s_{-}}(D)}
= q^{ \frac{c(D)}{4}}(-q^{\frac{1}{2}})^{v_{s_{-}}(D)}$.

\noindent
In particular, apply this to the diagram $D^n$ to obtain for $\langle D^n \rangle$:  
\smallskip

$\bullet$ 
The term of maximum $q$--degree of $\langle D^n \rangle|_{t = q^{-1/4}}$ 
is $q^{ \frac{c(D^n)}{4}}(-q^{\frac{1}{2}})^{v_{s_{-}}(D^n)}
= q^{ \frac{n^2c(D)}{4}}(-q^{\frac{1}{2}})^{nv_{s_{-}}(D)}$.

\noindent
Therefore, if $D$ is a $B$--adequate diagram, 
following (1) and (4) we have: 

$\bullet$ The term of the maximum $q$--degree of $\langle S_n(D) \rangle|_{t = q^{-1/4}}$  
coincides with the term of the maximum $q$--degree of $\langle D^n \rangle|_{t = q^{-1/4}}$. 

\noindent
Hence 
the term of the maximum $q$--degree of $\langle S_n(D) \rangle|_{t = q^{-1/4}}$ 
is 
$q^{\frac{n^2c(D)}{4}}(-q^{\frac{1}{2}})^{nv_{s_{-}}(D)}$.

This, together with (\ref{jones-skein}), 
shows that if $K$ is $B$-adequate, 
then the sign of the coefficient of the term of the maximum $q$--degree of $J_{K, n}(q)$ is 
\[(-1)^{n-1}(-1)^{(n-1)w(D)}(-1)^{(n-1)v_{s_{-}}(D)} = (-1)^{(n-1)(w(D)+v_{s_{-}}(D)+1)}.\] 
Hence $\varepsilon_m(K) = \varepsilon_n(K)$ 
if $m \equiv n \pmod 2$.  
\end{proof}

\medskip

\subsection{Some knots that do not satisfy the Sign Condition.}
\label{no_Sign_Condition}

A computer experiments 
suggest the knots $8_{20}$, $9_{43}$, and $9_{44}$ do not satisfy the Sign Condition. 
The following table gives the sign $\varepsilon_n(K)$
for $K=8_{20}$, $9_{43}$, $9_{44}$ and $1\le n \le 6$.
\begin{table}[htb]
\begin{center}
  \begin{tabular}{@{}lcccccc@{}} 
  \toprule
	$K$ & $\varepsilon_1(K)$ & $\varepsilon_2(K)$ & $\varepsilon_3(K)$ & $\varepsilon_4(K)$ & $\varepsilon_5(K)$ & $\varepsilon_6(K)$  \\
   \midrule
    $8_{20}$ & $-$ & $-$ & $+$ & $-$ & $-$ & $+$ \\ 
    $9_{43}$ & $-$ & $+$ & $+$ & $-$ & $+$ & $+$ \\ 
    $9_{44}$ & $+$ & $-$ & $-$ & $-$ & $+$ & $+$ \\ 
    \bottomrule
  \end{tabular}

\end{center}
\end{table}

We computed the colored Jones polynomials for these knots
using Mathematica package \verb!KnotTheory`! and its program \verb!ColouredJones! \cite{BarKT,Mat} in order to determine the signs $\epsilon_n(K)$.

In the above examples all knots have $\delta_K(n)$ with period $3$. 
It may be reasonable to ask: 

\begin{question}
\label{sign_condition_period2}
Let $K$ be a knot such that 
$\delta_K(n)$ has period $\le 2$. 
Then does $K$ satisfy the Sign Condition?
\end{question}

\bigskip

\appendix

\section{}\label{app:monosloped}

While \cite[Proposition~3.2]{KT} addresses a cabling formula for knots $K$ that have $\delta_K(n) = a(n)n^2 + b(n)n + c(n)$ with period $\leq 2$,  \cite[Proposition~4.4]{KT} gives a variant that allows for an arbitrary period at the expense of requiring $a(n)$ to be a constant function.
Similar to our revision of \cite[Proposition~3.2]{KT} with Proposition~\ref{max-degree} in Section~\ref{cabling}, 
in this appendix we correct and extend the statement of \cite[Proposition~4.4]{KT} with our Proposition~\ref{max-degree-monoslope} and give a revision of its proof in the spirit of the original. 
This then permits us to update \cite[Theorem~4.1]{KT} in Proposition~\ref{cable-max-mono}.

\begin{proposition}
\label{max-degree-monoslope} 
Let $K$ be a knot such that 
$\delta_K(n)=a n^2+b(n)n+c(n)$ has period $\pi$ with $b(n)\le 0$.
Suppose $\frac{p}{q} \neq 4 a$ if $b(n)=0$, 
and $K$ satisfies the Sign Condition. 
Let $M_1:=\max \{ |b(i)-b(j)| \, : \, i \equiv j \pmod 2\}$. 

Then 
$
\delta_{K_{p,q}}(n) =A n^2+B(n)n+C(n)
$
has period $\le \pi$ with
\[
 A   \in \{q^2 a, \frac {pq} 4\}
\quad and \quad 
 B(n)\le 0.
\]
Explicitly, we have 
\begin{equation*}
  \delta_{K_{p,q}}(n)=
   \left\{ \begin{array}{ll} 
           q^2 a n^2 +\left(q b(i)+\frac {(q-1)(p-4qa)} 2\right) n & \\
          \quad\quad +\left(a\cdot(q-1)^2-(b(i)+\frac p 2)(q-1)+c(i)\right) 
          	& \mbox{for } \frac p q < 4a-M_1\\
           \frac {pq(n^2-1)} 4 + C_{{\sigma}}(K_{p,q})  
           	&  \mbox{for }\frac p q  \geq  4a,
          \end{array} \right. 
\end{equation*}
where $i \equiv_{(\pi)} q(n-1)+1$, $\sigma \equiv_{(2)} n$, 
and $C_{{\sigma}}(K_{p,q})$
is a number that only depends on the knot $K$, 
the numbers $p$ and $q$, and the parity ${\sigma}$ of $n$. 
Furthermore, 
$K_{p,q}$ satisfies the Sign Condition. 
\end{proposition}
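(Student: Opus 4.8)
The plan is to mirror the proof of Proposition~\ref{max-degree} almost verbatim, tracking where the hypothesis ``period $\leq 2$'' was actually used and replacing it with the weaker control provided by $M_1$ and the fact that $a$ is now genuinely constant. First I would record the cabling formula (\ref{cabling.formula}) and the consequence (\ref{cabling.max}), noting that the Sign Condition argument at the end of the proof of Proposition~\ref{max-degree} is insensitive to the period of $\delta_K$: since $2qk+1 \equiv q(n-1)+1 \pmod 2$ for all $k \in \cS$, the leading terms of the summands share a common sign and cannot cancel, so (\ref{cabling.max}) is valid and $K_{p,q}$ satisfies the Sign Condition. This last assertion I would simply quote from the corresponding paragraph in the proof of Proposition~\ref{max-degree}, since nothing there used period $\leq 2$.

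Next I would set $N_K$ so that $d_+[J_{K,|m|}(v)] = \delta_K(|m|)$ for $|m| \geq 2qN_K+1$, partition $\cS = \cS^- \sqcup \cS^0 \sqcup \cS^+$ as before, and define $f(k)$, $f_\sigma(k)$, and the quadratic functions $g^\pm(k)$. The key difference: because $a$ is constant, the leading coefficient $-pq + 4q^2 a$ of $g^\pm$ no longer depends on the residue $m$, so there is a single sign governing concavity, determined by comparing $p/q$ with $4a$. What does depend on the residue is the linear and constant part, through $b(i)$ and $c(i)$ with $i \equiv_{(\pi)} q(n-1)+1$. The three cases are then: (Case 1) $p/q < 4a$, concave up; (Case 2) $p/q > 4a$, concave down; (Case 3) $p/q = 4a$ with $b<0$ (which here needs $b(n)$ to be a constant $b$ — actually $b(n)$ need not be constant, but if $p/q = 4a$ then for each relevant residue $i$ we have $b(i) < 0$ by the hypothesis excluding $b(i) = 0$ when $p/q = 4a$; I should be a little careful here and handle residues one at a time).

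The one genuinely new wrinkle — and the step I expect to be the main obstacle — is Case 1's threshold $p/q < 4a - M_1$ rather than just $p/q < 4a$. In the period-$\leq 2$ proof, after showing $g_i^+$ is maximized on $\cS^+$ at $k = \tfrac{n-1}{2}$ and $g_i^-$ on $\cS^-$ at $k = -\tfrac{n-1}{2}$, one compares the two maxima via $g_i^+(\tfrac{n-1}{2}) - g_i^-(-\tfrac{n-1}{2}) = (-p+4qa)(n-1) + 2b_i$, and there is a single $b_i$ because the parities of $|2qk+1|$ at the two endpoints agree mod $2$. With period $\pi$, the residue of $2q\cdot\tfrac{n-1}{2}+1$ and of $|{-2q\cdot\tfrac{n-1}{2}}+1| = 2q\cdot\tfrac{n-1}{2}-1$ modulo $\pi$ differ, so the endpoint values of $g^+$ and $g^-$ involve different residues $i^+, i^-$, hence different values $b(i^+), b(i^-), c(i^+), c(i^-)$. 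The comparison becomes $g^+(\tfrac{n-1}{2}) - g^-(-\tfrac{n-1}{2}) = (-p+4qa)(n-1) + \big(b(i^+)+b(i^-)\big) + \big(\text{bounded residue terms}\big)$; to guarantee this is positive for all large $n$ we only need the $O(n)$ coefficient $-p+4qa$ to be positive, which is already ensured by $p/q < 4a$. So in fact the comparison between $\cS^+$ and $\cS^-$ does not need $M_1$. Where $M_1$ enters is more subtle: to be certain that $f_\sigma$ is maximized at the single point $k = \tfrac{n-1}{2}$ and not at some other point of $\cS^+$ with a more favorable residue, one must ensure the concave-up quadratic $g^\pm_m$ is, for \emph{every} admissible residue $m$, still increasing throughout the portion of $\cS^+$ near the right endpoint — uniformly in $m$; quantitatively the vertex of $g_m^+(x) = (-pq+4q^2a)x^2 + (-p+4qa+2qb(m))x + \cdots$ sits at $x = -\tfrac{-p+4qa+2qb(m)}{2(-pq+4q^2a)}$, and for this to be safely to the left of $\cS^+$ one wants $-p + 4qa + 2qb(m) > 0$ for all $m$, i.e. $p/q < 4a + 2b(m)$ for all $m$; since $b(m) \le 0$ and $|b(i)-b(j)| \le M_1$, and since for the specific residue achieving the max $b$ we can write $b(m) \ge b(i) - M_1$, the clean sufficient condition is exactly $p/q < 4a - M_1$. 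I would spell out this vertex-location estimate carefully, as it is the crux, and then the remaining computations of $A, B(n), C(n)$ are identical substitutions into the formulas already derived in Proposition~\ref{max-degree}, now with the constant $a$ in place of $a_i$ and $b(i), c(i)$ carrying the residue dependence. Case 2 ($p/q \geq 4a$) goes through unchanged since $b(n) \le 0$ already forces the vertices of the concave-down $g^\pm_m$ to lie outside the relevant rays for every residue, with no need for $M_1$; hence the threshold there is the sharp $p/q \geq 4a$.
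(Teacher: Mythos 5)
Your overall architecture matches the paper's appendix proof (same partition $\cS=\cS^-\cup\cS^0\cup\cS^+$, same handling of the Sign Condition and of the cases $p/q>4a$ and $p/q=4a$), but your analysis of Case 1 --- which you rightly single out as the crux --- contains two genuine errors. First, the endpoint comparison between $\cS^+$ and $\cS^-$ \emph{does} need $M_1$. The residues at $k=\tfrac{n-1}{2}$ and $k=-\tfrac{n-1}{2}$ are $i^+\equiv q(n-1)+1$ and $i^-\equiv q(n-1)-1 \pmod{\pi}$, which in general differ (though they share a parity), and the correct expansion is
\[
g^+_{i^+}\!\left(\tfrac{n-1}{2}\right)-g^-_{i^-}\!\left(-\tfrac{n-1}{2}\right)
=\bigl((-p+4qa)+q\,(b(i^+)-b(i^-))\bigr)(n-1)+b(i^+)+b(i^-)+c(i^+)-c(i^-).
\]
The term $q\,(b(i^+)-b(i^-))(n-1)$ is not a bounded residue correction: it grows linearly in $n$ and can be as negative as $-qM_1(n-1)$, so positivity for large $n$ is \emph{not} guaranteed by $p/q<4a$ alone; this is precisely where the paper invokes the bound $-(p-4qa+M_1q)n+\beta$ and hence the hypothesis $p/q<4a-M_1$. (In the period $\le 2$ setting this term vanishes because $i^+\equiv i^-\pmod 2$, which is why you did not see it in Proposition~\ref{max-degree}.)

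Second, your mechanism for the intra-$\cS^+$ comparison is not the right one, and its algebra does not yield the stated threshold. The vertex condition $-p+4qa+2qb(m)>0$ for all $m$ reads $p/q<4a+2b(m)$, and this is \emph{not} implied by $p/q<4a-M_1$: take $b(n)\equiv b<0$ constant, so $M_1=0$, and any $p/q$ with $4a+2b\le p/q<4a$. Moreover, monotonicity of each parabola separately would not suffice in any case, since the parabola attached to a residue $j\neq i$ could take a larger value at $\max\cS^+(j)$ than $g_i^+$ takes at $\tfrac{n-1}{2}$ (through its $b(j)$ and $c(j)$ terms). What is actually needed --- and what the paper does --- is a direct cross-residue comparison of values, e.g.
\[
g_i^+\!\left(\tfrac{n-1}{2}\right)-g_j^+\!\left(\tfrac{n-1}{2}-1\right)= q\bigl(-p+4qa+(b(i)-b(j))\bigr)n+O(1),
\]
whose $n$--coefficient is positive under $p/q<4a-M_1$ because $b(i)-b(j)\ge -M_1$ (all residues met in $\cS^+$ have the same parity); concavity-up already places the maximum of each $g_m^{\pm}$ on its residue class at the extreme points, with the far endpoints winning for large $n$ since the quadratic term dominates, so no vertex condition is required. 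In short, you located the right step but both estimates you propose to write there are incorrect as stated; they must be replaced by the cross-residue endpoint comparisons controlled by $M_1$, both within $\cS^+$ and between $\cS^+$ and $\cS^-$. Your treatment of the Sign Condition and of Cases 2 and 3 is fine and agrees with the paper.
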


\begin{proof}
For notational concision, 
let us also write the periodic coefficients of $\delta_K(m)$ as $b_m = b(m)$ and $c_m=c(m)$ for integers $m$ considered mod $\pi$. 
(Note that $a(m) = a$ is constant by assumption.)
As in the proof of Proposition~\ref{max-degree} 
for each integer $n>0$, 
 let $\cS$ be the finite set of all numbers $k$ such that
\[
|k|\le \frac {n-1} 2 \quad \mathrm{and} \quad 
k\in \left\{ \begin{array}{ll} 
            \Z & \mathrm{if} \, n \,\mathrm{is \, odd},\\
            \Z+\frac 1 2 & \mathrm{if} \, n \,\mathrm{is \, even}.
            \end{array}
\right.
\]
That is,
\[ \cS = \left\{-\frac{n-1}{2},\ -\frac{n-1}{2}+1,\ -\frac{n-1}{2}+2,\ \dots,\ \frac{n-1}{2} -1,\ \frac{n-1}{2}\right\}.\]
Then for $n>0$ we have the following; see (\ref{cabling.formula}). 
\[
J_{K_{p,q}, n}(v)=v^{pq(n^2-1)/4}\sum_{k\in \cS} v^{-pk(qk+1)}J_{K, 2qk+1}(v), 
\]
where we use the convention that $J_{K, -m}(v) = -J_{K, m}(v)$ for integers $m>0$. 

Recall that,
for integers $n>0$ of a given parity, 
the parity of $2qk+1$ for $k \in \cS$ is constant. 
More precisely, if $n$ is odd, then $2qk + 1$ is odd, 
and if $n$ is even, then $2qk+1$ is odd or even according to whether $q$ is even or odd, respectively.
As in the proof of Proposition~\ref{max-degree}, due to the Sign Condition we still have equation~\ref{cabling.max}:
\[d_+[J_{K_{p,q}, n}(v)] = pq(n^2-1)/4 + \max_{k \in {\cS}} \{-pk(qk+1) + d_+[J_{K, |2qk+1|}(v)] \}\]

\medskip
First define 
\[
f(k) = -pk(qk+1) + d_+[J_{K, |2qk+1|}(v)] 
\]
for $k\in \cS$.   
Set $N_K \geq 0$ to be the first integer such that $d_+[J_{K, |m|}(v)] =\delta_K(|m|)$ for all integers $m$ with $|m| \ge 2q N_K +1$. 
Noting that $|2q(-N_K - \frac{1}{2}) + 1| \geq 2qN_K+1 > |2q(-N_K) + 1|$, 
partition $\cS$ into the three subsets 
\[\cS^- = \cS \cap (-\infty, -N_K-\frac{1}{2}], \quad \cS^0 = \cS \cap (-N_K-\frac{1}{2}, N_K), \quad \mbox{ and } \cS^+ = \cS \cap [N_K, \infty). \]
Note that when $n=1$, $\cS = \{ 0 \}$ and $\cS^- = \emptyset$.

Then, considering the quadratic quasi-polynomials for integers and half-integers $k$
\begin{align*}
h^+(k) &= -pk(qk+1) + \delta_K(2qk+1) \\
		&= (-pq + 4 q^2 a) k^2 + (-p+4q a + 2q b_{m}) k +(a + b_{m}+c_{m})\\
		& \quad \mbox{for } k \geq 0 \mbox{ and } m \equiv_{(\pi)} 2qk+1
\end{align*}

and
\begin{align*}
h^-(k)  &= -pk(qk+1) + \delta_K(|2qk+1|)\\
         & = -pk(qk+1) + \delta_K(-2qk-1)\\
	&= (-pq + 4 q^2 a) k^2 + (-p+4q a - 2q b_{m}) k +(a - b_{m}+c_{m})\\
	& \quad \mbox{for } k < 0 \mbox{ and } m \equiv_{(\pi)} |2qk+1|,
\end{align*}
define the quadratic real polynomials $h_m^{\pm}(x)$ for integers $m\pmod \pi$ by
\[h_m^{\pm}(x)=(-p q+4 q^2 a)x^2+(-p+4q a \pm 2q b_m)x+a\pm b_m+c_m.\]
Hence for integers and half-integers $k$, 
we have $h^\pm(k) = h^\pm_{m}(k)$ where $m \equiv_{(\pi)} |2qk+1| $ and $\pm$ means $+$ if $k \geq 0$ and $-$ if $k <0$.
Thus, on the subsets $\cS^\pm$, we have 
\[
f(k)= h_m^\pm(k)  \quad \mathrm{ if }\, k\in \cS^\pm\ \mathrm{ and }\ m\equiv_{(\pi)} |2qk+1|.
\]
While we have little information about $f(k)$ for $k \in \cS^0$, 
it belongs to only a finite set of values since 
\[\cS^0 \subset \left[-N_K, N_K - \frac 1 2\right] \cap \frac 1 2 \Z\]
for all $n>0$.

\bigskip

Recall that $f(k)$ is defined on half-integers $k$ if $n$ is even, 
and defined on integers $k$ if $n$ is odd. 
So to clarify the role of the parity of $n$, 
for each parity $\sigma \in \{0,1\}$ we define the function $f_{{\sigma}}(k)$ so that 
$f(k) = f_{{\sigma}}(k)$ if ${\sigma} \equiv_{(2)} n$. 
Then $f_0$ is defined on half-integers, while $f_1$ is defined on integers. 
Define $\cS^\pm(i) = \{ k \in \cS^\pm \colon  |2qk+1| \equiv_{(\pi)} i \} \subset \cS^\pm$. 
More explicitly, this is the function 
\[
f_{{\sigma}}(k)=\begin{cases}
h_i^+(k) &  \mathrm{if} \, k\in \cS^+(i),\\
h_i^-(k) &  \mathrm{if} \, k\in \cS^-(i), 
\end{cases}
\]
where $i \equiv_{(\pi)} |2qk+1|$.

Henceforth regard ${\sigma}$ as fixed choice of parity.
Using (\ref{cabling.max}) and $f_{{\sigma}}(k)$, 
we now proceed to determine $d_+[J_{K_{p,q}}(n)]$ for suitably large $n$ such that $n \equiv_{(2)}{\sigma}$. 

\smallskip

\noindent 
{\bf Case 1.} 
 Assume $p-(4a-M_1)q<0$. 
Then $-pq+4q^2a>0$,
and so the functions given by the quadratic polynomials $h_i^+(x)$ and $h_i^-(x)$ are concave up.
Hence, for any sufficiently large integer $n$,  
$h_i^+(k)$ is maximized on $\cS^+(i)$ at $k_i^+ = \max \cS^+(i) = \frac {n-1} 2 - J^+_i$ 
and $h_i^-(k)$ is maximized on $\cS^-$ at $k_i^- = \min \cS^-(i) = -\frac{n-1} 2 + J^-_i$ for some integers $0 \leq J^\pm_i$.  
We note that $\max \cS^+ = \max \cS^+(i)=\frac{n-1}{2}$ for $i$ with $i \equiv_{(\pi)} q(n-1)+1$.

Note that 
\begin{align*}
h_i^+(k_i^+)-h_j^+(k_j^+) &= 
((-p q+4 q^2 a)(k_i^+)^2+(-p+4q a +  2q b_i)k_i^+ + a + b_i+c_i)\\
& \phantom{offset space}
-
((-p q+4 q^2 a)(k_j^+)^2+(-p+4q a +  2q b_j)k_j^+ + a + b_j+c_j)\\
&=(-p q+4 q^2 a)((k_i^+)^2 - (k_j^+)^2) - (-p+4q a)(k_i^+ - k_j^+) + 2q( b_i k_i^+ -b_j  k_j^+) \\
& \phantom{offset space}+ b_i-b_j+c_i-c_j. 
\end{align*}
Especially, 
\begin{align*}
h_i^+(\frac {n-1}2)-h_j^+(\frac {n-1}2- 1) 
&=(-p q+4 q^2 a)(n-2) - (-p+4q a) + q( b_i-b_j  ) (n-1)+2qb_j\\
& \phantom{offset space}+ b_i-b_j+c_i-c_j\\
&= q(-p +4qa + (b_i-b_j))n+\alpha\\
& > q(-p +4qa -M_1q)n+\alpha \to \infty \quad (n \to \infty)
\end{align*}
for a constant $\alpha$. 
Therefore, for any sufficiently large $n$, 
$h_i^+(\frac {n-1}2) > h_j^+(\frac {n-1}2 - 1) \ge  h_j^+(\frac {n-1}2 - J^+_j)$.

Note that 
\begin{align*}
h_i^+(k_i^+)-h_j^-(k_j^-) &= 
((-p q+4 q^2 a)(k_i^+)^2+(-p+4q a +  2q b_i)k_i^+ + a + b_i+c_i)\\
& \phantom{offset space}
-
((-p q+4 q^2 a)(k_j^-)^2+(-p+4q a -  2q b_j)k_j^- + a - b_j+c_j)\\
&=(-p q+4 q^2 a)((k_i^+)^2 - (k_j^-)^2) - (-p+4q a)(k_i^+ - k_j^-) + 2q( b_i k_i^+ +b_j  k_j^-) \\
& \phantom{offset space}+ b_i+b_j+c_i-c_j. 
\end{align*}
Especially, 
\begin{align*}
h_i^+(\frac {n-1}2)-h_j^-(-\frac {n-1}2) 
&=(-p+4q a)(n-1) + q( b_i-b_j  ) (n-1)+ b_i+b_j+c_i-c_j\\
&=-(p-4qa+q(b_j-b_i))n+\beta\\
&>-(p-4qa+M_1 q)n+\beta \to \infty \quad (n \to \infty)
\end{align*}
for  a constant $\beta$. 
Hence, for any sufficiently large $n$, 
$h_i^+(\frac {n-1}2) > h_j^-(-\frac {n-1}2) \ge h_j^-(-\frac {n-1}2 + J^-_j)$.
Therefore, $f_{{\sigma}}(k)$ is maximized on the set $\cS^+ \cup \cS^-$ 
at $k=\frac {n-1} 2$. 

Since the elements of $\cS^0$ belong to a fixed finite set that is independent of $n$, 
the maximum of $f_{{\sigma}}(k)$ on $\cS^0$ has an upper bound that is independent of $n$.
Thus, for a sufficiently large integer $n$, 
we can be assured that $h_i^+(\frac {n-1} 2)$ exceeds this bound.  
Hence
\[ \max_{k\in \cS} f_{{\sigma}}(k) = f_{{\sigma}}(\frac{n-1} 2) = h_i^+(\frac {n-1} 2). \]
Then, Formula~(\ref{cabling.formula}) implies that for sufficiently large integer $n$
\begin{eqnarray*}
	d_+[J_{K_{p,q}}(n)]&=& \frac {pq(n^2-1)} 4+ h_i^+(\frac {n-1} 2) \\
	&=& q^2 a n^2+\left(qb_i+\frac {(q-1)(p-4qa)} 2\right)n \\
	&& \quad \quad +\left(a(q-1)^2-(b_i+\frac p 2)(q-1)+c_i\right). 
\end{eqnarray*}
for $i \equiv_{(\pi)} q(n-1)+1$.

\smallskip

\noindent 
{\bf Case 2.}   
Assume $p/q>4a$.  
Then $-pq+4q^2a<0$, 
and so the function given by the quadratic polynomial $h_i^+(x)$ is concave down 
and attains its maximum at 
\[
x=x_0:=-\left( \frac 1 {2q} + \frac {b_i}{-p+4qa}\right). 
\]
Since $b_i\le 0$, we have $x_0<0$. 
This implies that $h_i^+(x)$ is a strictly decreasing function on $[0,\infty )$.   
Similarly, the quadratic polynomial $h_i^-(x)$ is concave down 
 and attains its maximum at 
\[ 
x=x_0':=-\left( \frac 1 {2q} - \frac {b_i}{-p+4qa}\right). 
\]
Since $b_i\le 0$, we have $x_0' > -\frac 1 2$. 
This implies that $h_i^-(x)$ is a strictly increasing function on $(-\infty,-\frac 1 2]$. 
Thus $h_i^+(k)$ is maximized on $\cS^+(i)$ at $k_i^+ =\min \cS^+(i)$
 and $h_i^-(k)$ is maximized on $\cS^-(i)$ at $k_i^- = \max \cS^-(i)$.

Since $| \cS^0 | \le 2N_K$, 
there are at most $2N_K$ values $f(k)$ for $k \in \cS^0$, 
and thus we may take $M_0 = \max \{ f(k)\ |\ k \in \cS^0 \}$. 
Now let us put 
$\displaystyle C_{{\sigma}}(K_{p, q}) = \max_{0\le i <\pi}\{ f_{{\sigma}}(k_i^+), f_{{\sigma}}(k_i^-), M_0 \}$. 
Formula~(\ref{cabling.max}) implies that 
\[
d_+[J_{K_{p,q}, n}(v)]=\frac {pq(n^2-1)} 4+ C_{{\sigma}}(K_{p,q})
\]
for sufficiently large integer $n$ 
with ${\sigma} \equiv_{(2)} n$. 

Note that $B(n)=0$.  Hence the conclusion follows in this case too.

\smallskip

\noindent 
{\bf Case 3.} Assume $p/q=4a$ and $b_i<0$.
Then $-p+4qa=0$ so that
\[h_i^{\pm}(x)=\pm (2q b_i)x+a\pm b_i+c_i.\]
Since $q>1$ and $b_i<0$, $h_i^+(x)$ is strictly decreasing and $h_i^-(x)$ is strictly increasing.  
Thus $h_i^+(k)$ is maximized  on $\cS^+(i)$ at $k_i^+ =\min \cS^+(i)$ 
and $h_i^-(k)$ is maximized on $\cS^-(i)$ at $k_i^- = \max \cS^-(i)$.

As in Case 2, 
let $M_0 = \max \{ f_{{\sigma}}(k)\ |\ k \in \cS^0 \}$ and put $\displaystyle C_{{\sigma}}(K_{p, q}) = \max_{0\le i <\pi}\{ f_{{\sigma}}(k_i^+), f_{{\sigma}}(k_i^-), M_0 \}$. 
Then 
\[
d_+[J_{K_{p,q}, n}(v)]=\frac {pq(n^2-1)} 4+ C_{{\sigma}}(K_{p,q})
\]
for sufficiently large integer $n$ 
with ${\sigma} \equiv_{(2)} n$.  

\medskip

It remains to show that $\varepsilon_m(K_{p, q}) = \varepsilon_n(K_{p, q})$ for $m \equiv n\ \mathrm{mod}\ 2$. 
The identical argument in the proof of Proposition~\ref{max-degree} shows 
that the colored Jones polynomial of $J_{K_{p,q}, n}(v)$ has the required property of $\varepsilon_n(K_{p,q})$. 
\end{proof}

\begin{remark}
Let $K$ be the knot $8_{20}$.  
Then $d_+[J_{K, n}(v)] = \delta_K(n)$ for all $n > 0$ and it has period $3$ with constant $a(n) = a$ \cite{Garoufalidis}. 
So we may apply \cite[Proposition 4.4]{KT} to $K$. 
However, as shown in Example~\ref{no_Sign_Condition},  
$K$ does not satisfy the Sign Condition, 
and so we cannot apply Proposition~\ref{max-degree-monoslope} to $K$. 
\end{remark}

\begin{proposition}
\label{cable-max-mono}
Let $K$ be a knot such that
\begin{itemize}
\item $\delta_K(n)=a n^2+b(n)n+c(n)$ has period $\pi$, $b(n)\le 0$, and $4a \in \Z$,
\item $K$ satisfies the Sign Condition, and
\item the Jones slope $4a$ satisfies $SS(1)$ so that $K$ satisfies the Strong Slope Conjecture.
\end{itemize}
Let $M_1:=\max \{ |b(i)-b(j)| \, : \, i \equiv j \pmod 2\}$. \\
  Then for a non-trivial cable $K_{p,q}$ with $\frac p q \not \in [4a-M_1, 4a]$,
\begin{itemize}
\item $\delta_{K_{p,q}}(n)=An^2+B(n)n+C(n)$ has period $\le \pi$, $B(n) \le 0$, and $4A \in \Z$,
\item $K_{p,q}$ satisfies the Sign Condition, and  
\item the Jones slope $4A$ satisfies $SS(1)$ so that $K_{p, q}$ satisfies the Strong Slope Conjecture.
\end{itemize}
\end{proposition}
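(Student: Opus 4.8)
The plan is to mirror the proof of Proposition~\ref{cable-max} step for step, substituting Proposition~\ref{max-degree-monoslope} for Proposition~\ref{max-degree} and carrying the period $\pi$ (rather than $2$) through the argument.

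First I would extract the degree data. Since the cable $K_{p,q}$ is non-trivial we have $q>1$, and as $4a\in\Z$ the Jones slope $4a$ is an integer, so automatically $\frac p q\neq 4a$; the hypothesis $\frac{p}{q}\notin[4a-M_1,4a]$ then forces $\frac{p}{q}<4a-M_1$ or $\frac{p}{q}>4a$, the intermediate range being exactly what Proposition~\ref{max-degree-monoslope} leaves untreated. In the first range we are in its Case~1 and in the second in its Case~2 (Case~3, where $\frac p q=4a$, cannot occur). Applying Proposition~\ref{max-degree-monoslope} gives $\delta_{K_{p,q}}(n)=An^2+B(n)n+C(n)$ of period $\le\pi$ with $A\in\{q^2a,\ pq/4\}$ and $B(n)\le 0$. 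Since $A$ is constant in either case, $K_{p,q}$ has a single Jones slope $4A\in\{4q^2a,\ pq\}\subset\Z$, and the proposition also yields that $K_{p,q}$ satisfies the Sign Condition. This settles the first two bulleted conclusions, leaving only $SS(1)$ for the Jones slope $4A$.

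Next I would produce the required surface. By hypothesis $4a$ satisfies $SS(1)$, so there is an essential surface $S_K\subset E(K)$ of boundary slope $4a$ with $\chi(S_K)/|\partial S_K|=2b(1)$ (the denominator of the integer $4a$ in lowest terms being $1$). When $\frac p q<4a-M_1$ we have $A=q^2a$, and evaluating the index $i\equiv_{(\pi)}q(n-1)+1$ at $n=1$ gives $i\equiv_{(\pi)}1$, so $B(1)=qb(1)+\frac{(q-1)(p-4qa)}{2}$. I would feed $S_K$ into the cabling construction in the proof of \cite[Theorem~2.2]{KT} and apply \cite[Corollary~2.8]{KT} to get an essential surface $S\subset E(K_{p,q})$ of boundary slope $4A=4q^2a$ with $|\partial S|=|\partial S_K|$ and $\chi(S)=q\chi(S_K)+|\partial S_K|(q-1)(p-4aq)$; dividing, $\chi(S)/|\partial S|=2qb(1)+(q-1)(p-4qa)=2B(1)$, which is precisely $SS(1)$. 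When $\frac p q>4a$ we have $A=pq/4$ and $B(n)\equiv 0$, and the cabling annulus $S$ is an essential surface of boundary slope $pq=4A$ with $\chi(S)=0$, so $\chi(S)/|\partial S|=0=2B(1)$ and $SS(1)$ holds here too. These surface-level inputs from \cite{KT} are purely topological and make no use of the period being $\le 2$, so they transfer unchanged.

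The part needing the most care is bookkeeping rather than new mathematics: one must confirm that the excluded interval $[4a-M_1,4a]$ matches precisely the range Proposition~\ref{max-degree-monoslope} cannot control, that $A$ is genuinely constant so that ``single Jones slope'' is meaningful, and that the periodic index $i$ in $B(i)$, evaluated at $n=1$, produces the residue $1$ used in the Euler-characteristic identity above. Once these alignments are in place, the three conclusions follow exactly as in Proposition~\ref{cable-max}.
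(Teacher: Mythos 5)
Your proposal is correct and follows essentially the same route as the paper's proof: apply Proposition~\ref{max-degree-monoslope} (using the Sign Condition) to get the degree data, split into the cases $\frac{p}{q}<4a-M_1$ and $\frac{p}{q}>4a$ permitted by the hypothesis, and verify $SS(1)$ via the cabled surface from \cite[Theorem~2.2, Corollary~2.8]{KT} in the first case and the cabling annulus in the second. The only difference is cosmetic: you spell out explicitly that $4a\in\Z$ and $q>1$ rule out $\frac{p}{q}=4a$, which the paper leaves implicit.
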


\begin{proof}
By hypothesis $\delta_K(n) = an^2 + b(n)n + c(n)$ has period $\pi$ with $4a \in \Z$ and $b(n)\leq 0$.  

Then, because $K$ also satisfies the Sign Condition, Proposition~\ref{max-degree-monoslope} shows that 
$\delta_{K_{p,q}}(n)=An^2+B(n)n+C(n)$ has period $\le \pi$ and $K_{p,q}$ satisfies the Sign Condition.

Case (1):  If $\frac p q < 4a-M_1$,
then $A=q^2a$ and $B(n)=qb(i)+\frac {(q-1)(p-4qa)}{2} $ 
with $i \equiv_{(\pi)} q(n-1)+1$, and so 
$4A\in \Z$ and $B(n) \le 0$. 
Since $q(n-1)+1 \equiv_{(\pi)} 1$ for $n \equiv_{(\pi)} 1$, 
we have 
\begin{equation*}
A=q^2 a, \, B(1)=qb(1)+\frac {(q-1)(p-4qa)} 2.
\end{equation*}
Since $4a$ satisfies $SS(1)$ for $K$, 
there is an essential surface $S_K$ in $E(K)$ with boundary slope $4a$ 
such that  $\displaystyle{\frac {\chi (S_K)}{|\bdry S_K |}=2b(1)}$. 
As in Case 1 of Proposition~\ref{max-degree}, an essential surface $S$ in $E(K_{p,q})$ realizing the boundary slope $4A=4qa^2$ is obtained from $S_K$ in the proof of \cite[Theorem 2.2]{KT}. 
Furthermore \cite[Corollary 2.8]{KT} shows 
 $|\bdry S|=|\bdry S_K|$ and 
\[
  \chi (S)=q \chi (S_K)+|\bdry S_K|(q-1)(p-4aq),
\]
so that we obtain:
\[
 \frac {\chi (S)}{|\bdry S|}=2b(1)q+(q-1)(p-4aq)=2B(1).
\]
Hence the Jones slope $4A$ satisfies $SS(1)$ for $K_{p,q}$. 

(2) if $\frac p q > 4a$, then $A=\frac {pq} 4$ and $B(n)=0$,  
 $4A \in \Z$ and $B(n) \le 0$. 
Then the cabling annulus $S$ is an essential surface with boundary slope $pq=4A$ and 
$\frac {\chi (S)}{|\bdry S|}=0=2B(1)$. 
Hence the Jones slope $4A$ satisfies $SS(1)$ for $K_{p,q}$. 
\end{proof}


\begin{thebibliography}{99}

\bibitem{BMT}
K.L. Baker, K. Motegi and T. Takata, 
The strong slope conjecture for twisted generalized Whitehead doubles, 
to appear in Quantum Topology. 

\bibitem{BLMT_strong_slope}
K.L. Baker, C.R.S. Lee, K. Motegi and T. Takata, 
Notes on the Strong Slope Conjecture, 
in preparation. 


\bibitem{BarKT}
D. Bar-Natan, S.\ Garoufalidis, S.\ Sankaran, and et al.,
{\tt KnotTheory`} version of September 6, 2014, 13:37:37.2841 and {\tt ColouredJones[K,n][q]},
{\tt http://katlas.org/wiki/KnotTheory}


\bibitem{FKP}
D. Futer, E. Kalfagianni and J. Purcell; 
Slopes and colored Jones polynomials of adequate knots, 
Proc.\ Amer.\ Math.\ Soc.,\ \textbf{139} (2011), 1889--1896. 

\bibitem{Garoufalidis}
S. Garoufalidis; 
The Jones slopes of a knot, 
Quantum Topology \textbf{2} (2011), 43--69. 

\bibitem{Gqqp}
S. Garoufalidis; 
The degree of a $q$-holonomic sequence is a quadratic quasi-polynomial, 
Electron. J. Combin.  \textbf{18(2)} (2011), 23 pp. 

\bibitem{GL}
S. Garoufalidis and T.T. Le; 
The colored Jones function is $q$-holonomic, 
Geom. Topol. \textbf{9} (2005),1253--1293. 

\bibitem{GLV}
S. Garoufalidis, C. Lee and R. van der Veen; 
The slope conjecture for Montesinos knots, 
arXiv:1807.00957.

\bibitem{GAS}
F. Gonz\'{a}lez-Acu\~{n}a and H. Short;
 Knot surgery and primeness,
Math. Proc. Cambridge Philos. Soc. \textbf{99} (1986), 89--102.

\bibitem{Gor}
C.McA. Gordon; 
Dehn surgery and satellite knots, 
Trans.\ Amer.\ Math.\ Soc.,\ \textbf{275} (1983), 687--708.

\bibitem{Hat1}
A.E. Hatcher; On the boundary curves of incompressible surfaces, 
Pacific J.\ Math.\ \textbf{99} (1982), 373--377. 

\bibitem{Ho}
J. Howie: 
Coiled surfaces and slope conjectures, in preparation. 

\bibitem{K}
E. Kalfagianni; private communication. 

\bibitem{K-AMSHartford}
E. Kalfagianni;
``Colored Jones polynomials,''  
Talk at AMS meeting, Hartford, CT, April 2019.

\bibitem{KL}
E. Kalfagianni and C.R.D. Lee; 
Normal and Jones surfaces of knots,
J.\ Knot Theory Ramifications\ \textbf{27} (2018), 1850039. 

\bibitem{KT}
E. Kalfagianni and A.T. Tran;
Knot cabling and the degree of the colored Jones polynomial,
New York J.\ Math.\ \textbf{21} (2015), 905--941. 

\bibitem{Le-lec} 
T.T. Le;
The colored Jones polynomial and the AJ conjecture, \\
{\tt http://people.math.gatech.edu/\~{}letu/Papers/Lectures\_Luminy\_2014\_new.pdf}.

\bibitem{LV}
C. Lee and R. van der Veen; 
Slopes for pretzel knots, 
New York J.\ Math.\ \textbf{22} (2016), 1339--1364. 

\bibitem{LYL}
X. Leng, X. Yang, X. Liu: 
The slope conjecture for a family of Montesinos knots,
New York J.\ Math.\ \textbf{25} (2019), 45--70. 

\bibitem{Lic}
W.B.R. Lickorish; 
An introduction to knot theory, Graduate Texts in Mathematics, vol. 175, 
Springer-Verlag, New York, 1997.

\bibitem{Mat}
Wolfram Research{,} Inc.,
Mathematica, Version 12.0, 2019,
{\tt https://www.wolfram.com/mathematica}


\bibitem{Mor}
H. Morton; 
The coloured Jones function and Alexander polynomial for torus knots, 
Math.\ Proc.\ Camb.\ Philos.\ Soc.\ \textbf{117} (1995), 129--135.

\bibitem{MT}
K. Motegi and T. Takata; 
The slope conjecture for graph knots, 
Math.\ Proc.\ Camb.\ Philos.\ Soc.\ \textbf{162} (2017), 383--392. 

\bibitem{V1}
R. Van der Veen;
A cabling formula for the colored Jones polynomial,  preprint.  arXiv:0807.2679

\bibitem{V2}
R. Van der Veen;
A cabling formula for the colored Jones polynomial,  Proceedings at the conference 
{\it Oberwolfach Low-Dimensional Topology and Number Theory} (2010), 2101--2163.

\end{thebibliography}
\end{document}